\definecolor{halfgray}{gray}{0.55}
\definecolor{webgreen}{rgb}{0,0.5,0}
\definecolor{webbrown}{rgb}{.6,0,0} \hypersetup{%
\newtheorem{theorem}{Theorem}[section]
\newtheorem{lemma}[theorem]{Lemma}
\newtheorem{corollary}[theorem]{Corollary}
\newtheorem{proposition}[theorem]{Proposition}
\newtheorem{remark}[theorem]{Remark}
\theoremstyle{definition}
\begin{document}

\title
[A variational principle for impulsive systems]
{Topological pressure for discontinuous semiflows and a variational principle for impulsive dynamical systems}

\author{Lucas Backes}
\author{Fagner B. Rodrigues}

\address{\noindent Departamento de Matem\'atica, Universidade Federal do Rio Grande do Sul, Av. Bento Gon\c{c}alves 9500, CEP 91509-900, Porto Alegre, RS, Brazil.}

\email{lucas.backes@ufrgs.br}
\email{fagnerbernardini@gmail.com}

\date{\today}

\keywords{Topological pressure, Impulsive systems, Variational principle}
\subjclass[2010]{Primary: 37A05, 37A35.}

\begin{abstract}
We introduce four, a priori different, notions of topological pressure for possibly discontinuous semiflows acting on compact metric spaces and observe that they all agree with the classical one when restricted to the continuous setting. Moreover, for a class of \emph{impulsive semiflows}, which are examples of discontinuous systems, we prove a variational principle. As a consequence, we conclude that for this class of systems the four notions coincide and, moreover, they also coincide with the notion of topological pressure introduced in \cite{ACS17}. 
\end{abstract}

\maketitle

\section{Introduction}

The general aim of Ergodic Theory is to understand the stochastic behavior of deterministic systems and this is done by studying invariant measures. Since, in general, a dynamical system may have plenty of invariant measures, a fundamental question that arises is the following: which invariant measure should we choose to analyze the system?

Inspired by statistical mechanics, the theory of \emph{thermodynamical formalism} was introduced to the realm of Dynamical Systems by the pioneering works of Ruelle, Sinai and Bowen in \cite{BR75,Ruelle,RuelleS} and, since them, have been extensively studied by many authors (see, for instance, \cite{Baladi,AC,DG1999,Georgii,Lopes,VV,Walters1975}).  One of the main notions of this theory is that of \emph{topological pressure}. This quantity, defined in terms of topological properties, has a deep connection with the metric properties of the system. For instance, given a continuous map $T:X\to X$ acting on compact metric space and a continuous potential $f:X\to \mathbb R$, the topological pressure $P(T,f)$ of $f$ with respect to $T$ satisfies the \emph{variational principle}
\begin{equation} \label{eq: var princ intro}
 P(T,f)=\sup\{h_{\mu}(T)+\int_Xf(x)d\mu: \mu\text{ is a }T-\text{invariant probability measure}\},
\end{equation}
where $h_{\mu}(T)$ denotes the metric entropy of $(T,\mu)$. Moreover, whenever the map $T$ satisfies some form of hyperbolicity (for instance, uniform hyperbolicity \cite{Bow75}, partial-hyperbolicity \cite{LOR11,RS18}, non-uniformly hyperbolicity \cite{AMO04,AC,VV}, expansivity and specification \cite{CT16,F77}), it is known that the supremum in the right-hand side of the previous expression is attained by some measure. This measure, called an \emph{equilibrium state}, encodes several properties of the system. For instance, it allows us to determine various fractal dimensions associated to the dynamics \cite{Ba,Pesin}. In particular, thermodynamical formalism and more specifically relation \eqref{eq: var princ intro} provides us a way to choose ``interesting” measures and thus, it provides us candidates to answer our motivating question.

With these motivations in mind, the objective of this paper is twofold: (i) to introduce four notions of topological pressure for non-necessarily continuous semiflows and to study the relation between these notions with the standard one when restricted to the continuous setting; and, (ii) for a class of discontinuous systems known as \emph{impulsive systems}, to obtain an invariance principle.

 \subsection{Impulsive dynamical systems}
 Impulsive dynamical systems may be described by three objects: a continuous semiflow acting on a compact metric space $(X, d)$; a compact set $D\subset X$ where the flow suffer abrupt perturbations; and an impulsive function $I:D\to X $ which describes the perturbations in $D$ and, in general, generates discontinuities in the system. 

This kind of system seem to be an important mathematical model to describe real world phenomena that exhibit sudden changes in their states. For instance, it can be used to give a theoretical characterization of wormholes, also called Einstein-Rosen bridges; to study the population control of some insects with the number of insects and their natural enemies as state variables; to describe a chemical reactor system where the quantities of different chemicals are considered as the states; to model a financial system with two state variables: the amount of money in a market and the saving rates of a central bank. For references to these applications see \cite{LBS89,LWS,KT1994}. We also refer to the introduction of \cite{AC14} and references therein for more applications.

Whereas the study of the \emph{topological properties} of impulsive systems has been extensively studied in the last two decades (see, for instance, \cite{BBCC,LBS89}), an ergodic treatment of this special class of dynamics is still in it’s ``infancy”. For instance, it was only very recently that the existence of invariant measures was established \cite{AC14}. So, as already mentioned, our intent is to provide notions of topological pressure suitable for this context and to establish an invariance principle (see Theorem \ref{theo: var princ}) contributing to the study of thermodynamical formalism of these systems. Some previous work in this direction are \cite{ACV15,JSM19,JSM}, where a variational principle was established for the topological entropy and \cite{ACS17}, where a notion of topological pressure (a priori different from ours) was introduced and a variational principle along with the existence of equilibrium states was obtained. As a consequence of our main result we conclude that, restricted to an appropriate class of impulsive systems, our four notions of topological pressure actually coincide with the one introduced in \cite{ACS17} (see Corollary \ref{cor:relacao-de-tudo}) which also gives us the existence of equilibrium states.

\section{Setting and statements}
Let $(X,d)$ be a compact metric space and $\varphi:\mathbb{R}_0^+\times X\to  X$ a semiflow. This later condition means that $\varphi$ satisfies
\begin{displaymath}
\varphi (0,x)=x \text{ and } \varphi(s+t,x)=\varphi(s,\varphi(t,x))
\end{displaymath}
for all $s,t\in \mathbb{R}_0^+$ and $x\in X$. In what follows we are going to adopt the classical notation $\varphi (t,x)=\varphi_t(x)$.

\subsection{Topological pressure} In this section we introduce the notions of topological pressure that we are going to work with. We start by recalling the classical one.

\subsubsection{Classical definition} 
Let us assume $\varphi:\mathbb{R}_0^+\times X\to  X$ is a continuous semiflow. Given $\varepsilon>0$ and $T>0$, a set $E\subset X$ is said to be \emph{$(\varphi,T,\varepsilon)$-spanning}  if for all $x\in X$, there exists $y\in E$ so that for all $t\in [0,T]$
\begin{displaymath}
d(\varphi_t(x),\varphi_t(y))<\varepsilon.
\end{displaymath}
On the other hand, a set $F\subset X$ is said to be \emph{$(\varphi,T,\varepsilon)$-separated}  if for all $x_1\not=x_2\in F$, there exists  $t\in [0,T]$ so that
\begin{displaymath}
d(\varphi_t(x_1),\varphi_t(x_2))\geq\varepsilon.
\end{displaymath}

Given a continuous potential $f:X\to \mathbb{R}$, we define

\begin{displaymath}
Z_r(\varphi,f,\varepsilon,T) =\inf\left\{\sum_{x\in E}\exp\left(\int_{0}^{T}f(\varphi_s(x))ds\right):E \text{ is  } (\varphi,T,\varepsilon)-\text{spanning}\right\},
\end{displaymath}
and
\begin{displaymath}
Z_s(\varphi,f,\varepsilon,T) =\sup\left\{\sum_{x\in F}\exp\left(\int_{0}^{T}f(\varphi_s(x))ds\right):F \text{ is  } (\varphi,T,\varepsilon)-\text{separated }\right\}.
\end{displaymath}
Then we define the quantities
\begin{displaymath}
 P_s(\varphi,f,\varepsilon)  =\displaystyle\limsup_{T\to \infty}\frac{1}{T}\log Z_s(\varphi,f,\varepsilon,T)
\end{displaymath}
and
\begin{displaymath}
 P_s(\varphi,f) =\lim_{\varepsilon\to 0^+}P_s(\varphi,f,\varepsilon).
\end{displaymath}
Notice that this last limit makes sense once,
\begin{itemize}
\item if $0<\varepsilon_1<\varepsilon_2$, then  $Z_s(\varphi,f,\varepsilon_1,T)\geq Z_s(\varphi,f,\varepsilon_2,T)$;

\item if $0<\varepsilon_1<\varepsilon_2$, then $P_s(\varphi,f,\varepsilon_1)\geq P_s(\varphi,f,\varepsilon_2)$.
\end{itemize}

If instead of considering $Z_s(\varphi,f,\varepsilon,T)$ in the above definitions we consider $Z_r(\varphi,f,\varepsilon,T)$, then we define $P_r(\varphi,f)$. It is a classical result (see for instance \cite{VK16}) that in the above setting these two quantities $P_s(\varphi,f)$ and $P_r(\varphi,f)$ coincide. We define the \textit{topological pressure} of $f$ as
\begin{displaymath}
P(\varphi,f):=P_s(\varphi,f)=P_r(\varphi,f).
\end{displaymath}

\subsubsection{New definitions} \label{sec: new top pressure}
Let $\varphi:\mathbb{R}_0^+\times X\to  X$ be a semiflow (not necessarily continuous). Given $\delta>0$, let us consider the pseudometrics $\hat{d}_\delta^{\varphi}:X\times X\to \mathbb{R}_0^+$ and $\overline{d}_\delta^{\varphi}:X\times X\to \mathbb{R}_0^+$, introduced in \cite{JSM19} and \cite{JSM}, respectively, given by
\begin{displaymath}
\hat{d}_\delta^{\varphi}(x,y)=\inf\{d(\varphi_{s}(x),\varphi_{s}(y)):s\in [0,\delta)\}
\end{displaymath}
and
\begin{displaymath}
\overline{d}_\delta^{\varphi}(x,y)=\inf\{d(\varphi_{s_1}(x),\varphi_{s_2}(y)):s_1,s_2\in [0,\delta)\}.
\end{displaymath}
In order to simplify notation, when there is no risk of ambiguity we write $\hat{d}_\delta$ and $\overline{d}_\delta$ instead of $\hat{d}_\delta^{\varphi}$ and $\overline{d}_\delta^{\varphi}$, respectively.

Given $\varepsilon>0$ and $T>0$, a set $E\subset X$ is said to be \emph{$(\varphi,T,\varepsilon,\delta)$-spanning} with respect to $\overline{d}_\delta$, if for all $x\in X$, there exists $y\in E$ so that for all $t\in [0,T]$
\begin{displaymath}
\overline{d}_\delta(\varphi_t(x),\varphi_t(y))<\varepsilon.
\end{displaymath}
On the other hand, a set $F\subset X$ is said to be \emph{$(\varphi,T,\varepsilon,\delta)$-separated} with respect to $\overline{d} _\delta$, if for all $x_1\not=x_2\in F$ there exists $t\in [0,T]$ so that
\begin{displaymath}
\overline{d}_\delta(\varphi_t(x_1),\varphi_t(x_2))\geq\varepsilon.
\end{displaymath}

Given a continuous potential $f:X\to \mathbb{R}$, we define
\begin{displaymath}
\overline{Z}_r(\varphi,f,\varepsilon,\delta,T) =\inf\left\{\sum_{x\in E}\exp\left(\int_{0}^{T}f(\varphi_s(x))ds\right):E \text{ is } (\varphi,T,\varepsilon,\delta)-\text{spanning w.r.t. $\overline{d} _\delta$}\right\},
\end{displaymath}
and
\begin{displaymath}
\overline{Z}_s(\varphi,f,\varepsilon,\delta,T) =\sup\left\{\sum_{x\in F}\exp\left(\int_{0}^{T}f(\varphi_s(x))ds\right):F \text{ is  } (\varphi,T,\varepsilon,\delta)-\text{separated w.r.t. $\overline{d} _\delta$}\right\}.
\end{displaymath}
Moreover, we define
\begin{align*}
 \overline P_s(\varphi,f,\varepsilon,\delta) & =\displaystyle\limsup_{T\to \infty}\frac{1}{T}\log \overline{Z}_s(\varphi,f,\varepsilon,\delta,T),\\
 \overline P_s(\varphi,f,\delta)& =\lim_{\varepsilon\to 0^+}P_s(\varphi,f,\varepsilon,\delta).
 \end{align*}
The \emph{upper topological pressure} of $f$ is then defined as
\begin{displaymath}
\overline  P_s(\varphi,f) =\lim_{\delta\to 0^+}P_s(\varphi,f,\varepsilon,\delta).
\end{displaymath}

As in the classical setting, the upper topological pressure is well defined, because
\begin{itemize}
\item if $0<\varepsilon_1<\varepsilon_2$, then  $\overline{Z}_s(\varphi,f,\varepsilon_1,\delta,T)\geq \overline{Z}_s(\varphi,\varepsilon_2,\delta,T)$;

\item  if $0<\varepsilon_1<\varepsilon_2$, then $\overline P_s(\varphi,f,\varepsilon_1,\delta)\geq \overline P_s(\varphi,f,\varepsilon_2,\delta)$;

\item  if $0<\delta_1<\delta_2$, then $\overline P_s(\varphi,f,\delta_1)\geq \overline P_s(\varphi,f,\delta_2)$.
\end{itemize}

By considering $\overline{Z}_r(\varphi,f,\varepsilon,\delta,T)$ instead of $\overline{Z}_s(\varphi,f,\varepsilon,\delta,T)$ in the above definitions, we define the \emph{lower topological pressure} of $f$ and denote it by $\overline P_r(\varphi,f)$.

Similarly, by changing the pseudometric $\overline{d}_\delta$ by $\hat{d}_\delta$ we may consider the notions of spanning and separated sets with respect to $\hat{d}_\delta$ and, by considering spanning and separated sets with respect to $\hat{d}_\delta$ instead of spanning and separated sets with respect to $\overline{d}_\delta$, we may consider analogous versions of all the previous notions like $\hat{Z}_r(\varphi,f,\varepsilon,\delta,T)$ and $\hat{Z}_s(\varphi,f,\varepsilon,\delta,T)$ and define
\begin{displaymath}
\hat{P}_r(\varphi,f) \text{ and } \hat{P}_s(\varphi,f).
\end{displaymath}

\begin{remark}\label{rmk1}
If we consider $f$ constant and equal to zero in the above definitions we recover the definitions of topological entropy for semiflows introduced in \cite{JSM19} and \cite{JSM}.
\end{remark}

Our first result show us that, for continuous semiflows, the new notions of topological pressure introduced above coincide with the classical one.
\begin{theorem} \label{theo: pressure continuous}
Let $\varphi:\mathbb{R}_0^+\times X\to  X$ be a continuous semiflow acting on a compact metric space $(X,d)$. Then,
\begin{displaymath}
P(\varphi,f)=\overline{P}_r(\varphi,f)=\overline{P}_s(\varphi,f)=\hat{P}_r(\varphi,f)=\hat{P}_s(\varphi,f).
\end{displaymath}
\end{theorem}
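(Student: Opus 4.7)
The plan is to prove the chain of inequalities $\overline P_*(\varphi,f)\le\hat P_*(\varphi,f)\le P(\varphi,f)\le\overline P_*(\varphi,f)$ for $*\in\{r,s\}$, which forces all five pressures to coincide. The first two inequalities come essentially for free from the pointwise comparison $\overline d_\delta\le\hat d_\delta\le d$ (take $s_1=s_2=0$ and $s=0$ respectively): every classically $(\varphi,T,\varepsilon)$-spanning set is $(\varphi,T,\varepsilon,\delta)$-spanning with respect to both $\hat d_\delta$ and $\overline d_\delta$, and every $\overline d_\delta$-separated set is $\hat d_\delta$-separated and in turn classically separated. Comparing the families over which $Z_r$ is an infimum and $Z_s$ a supremum therefore yields $\overline Z_*\le\hat Z_*\le Z_*$, and the usual limits in $T,\varepsilon,\delta$ give $\overline P_*\le\hat P_*\le P(\varphi,f)$.

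For the remaining inequality $P(\varphi,f)\le\overline P_*(\varphi,f)$ continuity of $\varphi$ must finally be used. The key lemma I would extract is that joint continuity of $\varphi$ on the compact set $[0,1]\times X$ gives, for every $\eta>0$, a $\delta_0(\eta)>0$ with $d(\varphi_s(z),z)<\eta$ for all $s\in[0,\delta_0]$ and all $z\in X$. A triangle-inequality argument then shows that for $\delta\le\delta_0(\eta)$ and any $a,b\in X$ one has $d(a,b)\le\overline d_\delta(a,b)+2\eta$; equivalently, $d(a,b)\ge\mu+2\eta$ forces $\overline d_\delta(a,b)\ge\mu$.

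With this in hand the reverse inequalities become routine. Fix $\varepsilon>0$, set $\eta=\varepsilon/4$, and take $\delta\le\delta_0(\eta)$. If $F$ is classically $(\varphi,T,\varepsilon)$-separated, each pair $x_1\ne x_2\in F$ admits some $t\in[0,T]$ with $d(\varphi_t(x_1),\varphi_t(x_2))\ge\varepsilon=\varepsilon/2+2\eta$, so the key lemma gives $\overline d_\delta(\varphi_t(x_1),\varphi_t(x_2))\ge\varepsilon/2$, making $F$ a $(\varphi,T,\varepsilon/2,\delta)$-separated set with respect to $\overline d_\delta$; hence $Z_s(\varphi,f,\varepsilon,T)\le\overline Z_s(\varphi,f,\varepsilon/2,\delta,T)$. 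Dually, if $E$ is $(\varphi,T,\varepsilon/2,\delta)$-spanning with respect to $\overline d_\delta$ then each $x\in X$ has $y\in E$ with $\overline d_\delta(\varphi_t(x),\varphi_t(y))<\varepsilon/2$ for every $t\in[0,T]$, and the key lemma upgrades this to $d(\varphi_t(x),\varphi_t(y))<\varepsilon$, so $E$ is classically $(\varphi,T,\varepsilon)$-spanning and $Z_r(\varphi,f,\varepsilon,T)\le\overline Z_r(\varphi,f,\varepsilon/2,\delta,T)$. Applying $\limsup_{T\to\infty}T^{-1}\log$ and then sending $\delta\to0^+$ and $\varepsilon\to0^+$ produces $P_*(\varphi,f)\le\overline P_*(\varphi,f)$; the $\hat d_\delta$-versions then follow either by the same argument or simply from the sandwich already in place.

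The main subtlety---essentially the only place where care is needed---is that the admissible $\delta$ depends on $\varepsilon$ via $\delta_0(\varepsilon/4)$, so one cannot interchange the two limits naively. The monotonicity properties recorded in Section~\ref{sec: new top pressure} rescue us: for fixed $\varepsilon$, the quantity $\overline P_*(\varphi,f,\varepsilon,\delta)$ is non-increasing in $\delta$, so its limit as $\delta\to0^+$ is a supremum and in particular dominates the value at any specific admissible $\delta\le\delta_0(\varepsilon/4)$. One therefore lets $\delta\to0^+$ first (for fixed $\varepsilon$) and only afterwards sends $\varepsilon\to0^+$, at which point the chain closes and all five pressures collapse onto $P(\varphi,f)$.
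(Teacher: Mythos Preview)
Your proof is correct and rests on the same key observation as the paper's: uniform continuity of $\varphi$ at $t=0$ on the compact $X$ gives $d(\varphi_s(z),z)<\eta$ for all small $s$, and a triangle inequality then compares $d$ with $\overline d_\delta$. The organizational difference is that the paper establishes only $P_r\le\overline P_r$ via this continuity argument and then closes the chain $P_r\le\overline P_r\le\overline P_s\le P_s=P_r$ by invoking the separate Lemma~\ref{lemma: lower upper} (a Zorn's-lemma maximal-separated-set-is-spanning argument) for the middle inequality; you instead prove both $P_r\le\overline P_r$ and $P_s\le\overline P_s$ directly and handle $\hat P_*$ by the pointwise sandwich $\overline d_\delta\le\hat d_\delta\le d$, which is slightly more economical and makes Lemma~\ref{lemma: lower upper} unnecessary for this particular theorem. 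Your remark about the double limit is also a bit more explicit than the paper's treatment: since $\overline P_*(\varphi,f,\varepsilon,\delta)$ is monotone in each variable the two iterated limits coincide with the joint supremum, which justifies taking $\delta\to0^+$ before $\varepsilon\to0^+$.
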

The proof of this result is presented in Section \ref{sec: proof theo cont pressure}.

\subsection{Impulsive semiflows and a variational principle} Let $\varphi:\mathbb{R}^+_0 \times X\to X$ be a continuous semiflow, $D\subset X$ a nonempty compact  set and $I:D \to X$ a continuous map satisfying $I(D)\cap D=\emptyset$. Under these conditions we say that $(X,\varphi,D,I)$ is an \emph{impulsive dynamical system}.

Let $\tau_1:X\to~[0,+\infty]$ be the function given by
\begin{displaymath}
\tau_1(x)=\left\{
\begin{array}{ll}
\inf\left\{t> 0 \colon \varphi_t(x)\in D\right\} ,& \text{if } \varphi_t(x)\in D\text{ for some }t>0;\\
+\infty, & \text{otherwise.}
\end{array}
\right.
\end{displaymath}
Observe that $\tau_1(x)$ gives us the first time the $\varphi$-trajectory of $x$ visits $D$. We then define the \emph{impulsive trajectory} $\gamma_x$ and the subsequent \emph{impulsive times} $\tau_2(x),\tau_3(x),\dots$ (possibly finitely many) of a given point $x\in X$ as follows:

\begin{itemize}[leftmargin=*]
\item If $0\le t<\tau_{1}(x)$ then we set  $\gamma_x(t)=\varphi_t(x)$;

\item We now proceed inductively: assuming that $\gamma_x(t)$ is defined for $t<\tau_{n}(x)$ for some $n\ge 1$, we set
\begin{displaymath}
\gamma_x(\tau_{n}(x))=I(\varphi_{\tau_n(x)-\tau_{n-1}(x)}(\gamma_x({\tau_{n-1}(x)}))).
\end{displaymath}
Then, defining the $(n+1)^{\text{th}}$ impulsive time of $x$ as
\begin{displaymath}
\tau_{n+1}(x)=\tau_n(x)+\tau_1(\gamma_x(\tau_n(x))),
\end{displaymath}
for $\tau_n(x)<t<\tau_{n+1}(x)$, we set
\begin{displaymath}
\gamma_x(t)=\varphi_{t-\tau_n(x)}(\gamma_x(\tau_n(x))).
\end{displaymath}
\end{itemize}
As observed in Remark 1.1 of \cite{AC14}, under the above assumptions, we have that $\sup_{n\ge 1}\,\{\tau_n(x)\}=+\infty$, which guarantees that the impulsive trajectories are defined for all positive times. This allows us to introduce the \emph{impulsive semiflow} $\psi$ associated to the impulsive dynamical system $(X,\varphi, D, I)$ as
\begin{displaymath}
\begin{array}{cccc}
        \psi:  &  \mathbb{R}^+_0 \times X & \longrightarrow &X \\
        & (t,x) & \longmapsto & \gamma_x(t), \\
        \end{array}
\end{displaymath}
where $\gamma_x$ stands for the impulsive trajectory of $x$ determined by $(X,\varphi,D, I)$. It is easy to see that $\varphi$ is indeed a semiflow \cite[Proposition 2.1]{B07}, although not necessarily continuous. Moreover, $\tau_1$ is lower semicontinuous on the set $X\setminus D$ \cite[Theorem~2.7]{C04}. Furthermore, since we are assuming  that $I(D)\cap (D)=\emptyset$ and $I(D)$ is compact, there exists some $\eta>0$ such that $\tau_{n+1}(x)-\tau_n(x)\geq\eta$ for all $x\in X$ and $n\in \mathbb{N}$.

In order to state and prove our results, we need the impulsive system $(X,\varphi,D, I)$ to satisfy some regularity conditions. These conditions were already used in \cite{ACS17,ACV15} and we now recall them.

Given $\xi >0$, let us consider
\begin{equation*}
D_\xi=\bigcup_{x\in D }\{\varphi_t(x) \colon  0<t<\xi\}.
\end{equation*}
We say that $\varphi$ is \emph{$\xi$-regular on $D$} if
\begin{enumerate}
\item $D_t$ is an open set for all $0<t\leq \xi$;
\item if $\varphi_t(x)\in D_{\xi}$ for some  $x\in X$ and $t>0$, then $ \varphi_{s}(x)\in D$  for some $0\le s<t$.
\end{enumerate}
We say that $\varphi$ satisfies a \emph{$\xi$-half-tube condition} on a compact set $A\subset X$ if
\begin{enumerate}
\item $ \varphi_t(x) \in A \,\Rightarrow \,\varphi_{t+s}(x) \notin A$ for all $0 < s < \xi$;
\item $\{\varphi_t(x_1) \colon  0<t\le \xi\}\cap \{\varphi_t(x_2) \colon  0<t\le \xi\}=\emptyset$ for all $x_1,x_2\in A$ with $x_1\neq x_2$;
\item there exists $C>0$ such that, for all $x_1,x_2\in A$ with $x_1\neq x_2$, we have
$$0\leq t<s\leq \xi \quad \Rightarrow \quad d\,(\varphi_t(x_1), \varphi_t(x_2))\leq C\,d\,(\varphi_s(x_1), \varphi_s(x_2)).$$
\end{enumerate}
In what follows, we are going to assume that $\varphi$ satisfies a $\xi$-half-tube condition on the compact sets $D$ and $I(D)$. In particular, the first condition in the definition applied to $A=D$ implies that $\tau_1(x)\geq \xi>0$ for every $ x\in D$.

Given  $\xi>0$  we define
$$X_\xi=X\setminus (D_\xi\cup D).$$
Since $D$ is compact, $I$ is continuous and $I(D)\cap D=\emptyset$, we may choose $\xi$ small enough so that $I(D)\cap D_\xi=\emptyset$. In particular, $X_\xi$ is forward invariant under $\psi$, that is,
\begin{equation}\label{eq: forward inv}
\psi_t(X_\xi) \subseteq X_\xi,\quad \forall t \geq 0.
\end{equation}

We now summarize the properties about impulsive systems that we are going to use in our main result: suppose there exists $\xi_0>0$ such that for all $0<\xi<\xi_0$ we have:
\begin{enumerate}
\item[(C1)]  $I:D \to X$ is Lipschitz  with $\text{Lip}(I)\le 1$ and $I(D)\cap D=\emptyset$;
\item[(C2)] $I(\Omega_\psi \cap D) \subset \Omega_\psi \setminus D$, where $\Omega_\psi$ denotes the set of non-wandering points of $\psi$;
\item[(C3)]  $\varphi$ is $\xi$-regular on   $D$;
\item[(C4)] $\varphi$ satisfies a $\xi$-half-tube condition on both $D$ and $I(D)$;
\item[(C5)] $\tau^*_\xi$ is continuous where $\tau^*_\xi:X_\xi \cup D \to [0,+\infty]$ is given by
\begin{displaymath}
\tau^*_\xi(x)=
\begin{cases}
\tau_1(x), &\text{if $x\in X_\xi$};\\
0, &\text{if $x\in D$}.
\end{cases}
\end{displaymath}
\end{enumerate}

Some comments about our hypothesis are in order. Conditions (C3) and (C4) might, at a first glance, seem very restrictive but they are satisfied, for instance, whenever $\varphi$ is a $C^1$ semiflow on a manifold for which $D$ and $I(D)$ are submanifolds transversal to the flow direction. Regarding conditions (C2) and (C5), according to \cite{AC14}, they are essential to guarantee that the set $\mathcal{M}_\psi(X)$  of $\psi$-invariant measures is nonempty and this fact is prerequisite if one wants to obtain a variational principle, as in our case. Examples of impulsive systems satisfying conditions (C1)-(C5) are given in \cite[Section 7]{ACS17}.

\subsubsection{Admissible potentials} We now recall the class of potentials that we are going to work with. This class was introduced in \cite{ACS17} by refining a class proposed in \cite{F77}. We say that a continuous map $f:X \rightarrow \mathbb{R}$ is an \emph{admissible potential} with respect to the impulsive semiflow $\psi$ (associated to the impulsive system $(X,\varphi,D,I)$) if
\begin{enumerate}
\item $f(x)=f(I(x))$ for every $x\in D$;
\item there exist $K>0$ and $\varepsilon > 0$ such that for every $t>0$  we have
\begin{equation*}
\left|\int_0^t f(\psi_s(x))\,ds - \int_0^t f(\psi_s(y))\,ds\right|< K,
\end{equation*}
whenever
$d\,(\psi_{s}(x), \psi_s(y)) < \varepsilon$ for all $ s \in [0,t]$ such that $\psi_s(x),\psi_s(y)\notin B_\varepsilon(D)$, where $B_\varepsilon(D)$ denote an $\varepsilon$-neighborhood of $D$.
\end{enumerate}
We denote by $\mathcal{A}(\psi)$ the set of all admissible potentials associated to the impulsive semiflow $\psi$. Observe that, for instance, all constant potentials are in $\mathcal{A}(\psi)$.

\subsubsection{Variational Principle} We are now able to state our main result which is a variational principle for impulsive semiflows.

\begin{theorem}\label{theo: var princ}
Let $(X,d)$ be a compact metric space and $\psi$ the semiflow associated to an impulsive dynamical system $(X,\varphi,D,I) $ for which conditions (C1)-(C5) are satisfied. Then, for any admissible potential $f\in  \mathcal{A}(\psi)$ we have
\begin{displaymath}
\overline{P}_r(\psi,f)=\overline{P}_s(\psi,f)=\hat{P}_r(\psi,f)=\hat{P}_s(\psi,f)=\sup_{\mu\in \mathcal{M}_{\psi}(X)} \left\{ h_\mu(\psi_1) + \int f\,d\mu\right\},
\end{displaymath}
where  $\psi_1$ stands for the time one map of the semiflow $\psi$ and $\mathcal{M}_{\psi}(X)$ denotes the space of all $\psi$-invariant probability measures.
\end{theorem}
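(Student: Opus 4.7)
The plan is to exhibit the cycle
\[
\sup_{\mu\in \mathcal{M}_\psi(X)}\left\{h_\mu(\psi_1)+\int f\,d\mu\right\} \leq \overline{P}_r(\psi,f) \leq \hat{P}_s(\psi,f) \leq \sup_{\mu\in \mathcal{M}_\psi(X)}\left\{h_\mu(\psi_1)+\int f\,d\mu\right\},
\]
together with the middle identifications $\overline{P}_r = \overline{P}_s \leq \hat{P}_r = \hat{P}_s$. The equalities $P_r=P_s$ inside each family are the standard Bowen argument that a maximal separated set is also spanning (verbatim in the weighted setting), while the inequality between the barred and the hatted pressures is immediate from $\overline{d}_\delta \leq \hat{d}_\delta$: the infimum defining $\overline{d}_\delta$ runs over a strictly larger set, so every $\hat{d}_\delta$-spanning set is $\overline{d}_\delta$-spanning and every $\overline{d}_\delta$-separated set is $\hat{d}_\delta$-separated. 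Thus the theorem reduces to the two outer inequalities of the cycle above.

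For the lower bracket, given any $\mu\in\mathcal{M}_\psi(X)$ I would run a Katok-style argument tailored to $\overline{d}_\delta$. Apply the Brin--Katok formula to the map $\psi_1$ to build, for large $T$, a subset of $X$ of $\mu$-mass at least $1/2$ on which every $(\psi,T,\varepsilon,\delta)$-Bowen ball for $\overline{d}_\delta$ has $\mu$-mass at most $e^{-T(h_\mu(\psi_1)-\eta)}$; any $(\psi,T,\varepsilon,\delta)$-$\overline{d}_\delta$-spanning set must then contain at least $\tfrac{1}{2} e^{T(h_\mu(\psi_1)-\eta)}$ points. Coupling this with the Birkhoff ergodic theorem for $\psi_1$ and the admissibility constant $K$ from condition (2) in the definition of $\mathcal{A}(\psi)$ converts the cardinality estimate into
\[
\overline{Z}_r(\psi,f,\varepsilon,\delta,T)\;\geq\; \tfrac{1}{2}\,e^{-K}\,e^{T(h_\mu(\psi_1)+\int f\,d\mu-\eta')}.
\]
Letting $T\to\infty$ and then $\varepsilon,\delta,\eta'\to 0$ gives $h_\mu(\psi_1)+\int f\,d\mu \leq \overline{P}_r(\psi,f)$, as desired.

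For the upper bracket I would follow the Misiurewicz route. For each large $T$, fix a $(\psi,T,\varepsilon,\delta)$-separated set $F_T$ with respect to $\hat{d}_\delta$ that nearly realises $\hat{Z}_s(\psi,f,\varepsilon,\delta,T)$, form the weighted empirical measure
\[
\nu_T = \frac{1}{\hat{Z}_s(\psi,f,\varepsilon,\delta,T)}\sum_{x\in F_T} e^{\int_0^T f(\psi_s(x))\,ds}\,\frac{1}{T}\int_0^T \delta_{\psi_s(x)}\,ds,
\]
and extract a weak-$\ast$ subsequential limit $\nu_{T_n}\to \mu$. That $\mu$ is $\psi$-invariant is precisely the content of conditions (C2) and (C5), as in the invariant-measure construction of \cite{AC14, ACS17}. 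Choosing a finite measurable partition $\mathcal{P}$ of $X$ with $\text{diam}(\mathcal{P})<\varepsilon$ and $\mu(\partial\mathcal{P})=0$, a counting bound on the refinement $\bigvee_{i=0}^{\lfloor T_n\rfloor}\psi_1^{-i}\mathcal{P}$ via the concavity of $-x\log x$ (exactly as in Walters' proof of the classical variational principle), together with the admissibility constant $K$ for $f$, yields
\[
\tfrac{1}{T_n}\log \hat{Z}_s(\psi,f,\varepsilon,\delta,T_n)\;\leq\; h_\mu(\psi_1,\mathcal{P}) + \int f\,d\mu + o(1),
\]
and letting $T_n\to\infty$ followed by $\varepsilon,\delta\to 0$ produces $\hat{P}_s(\psi,f)\leq h_\mu(\psi_1)+\int f\,d\mu$.

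The main obstacle is producing such a finite partition $\mathcal{P}$ with small diameter, $\mu$-null boundary, and atoms adapted to $\hat{d}_\delta$ in the sense that $\hat{d}_\delta$-separated points fall in distinct atoms of the relevant refinement, despite the discontinuity of $\psi_1$ along $\psi^{-1}(D)$. Conditions (C3) and (C4) are precisely what enables this: the $\xi$-regularity of $\varphi$ on $D$ and the $\xi$-half-tube condition on $D$ and $I(D)$ let one construct $\mathcal{P}$ out of flow boxes whose boundaries are finite unions of small patches of $D$, $I(D)$, and short transversal $\varphi$-orbit pieces; the $\psi$-invariance of $\mu$ combined with the uniform gap $\tau_{n+1}(x)-\tau_n(x)\geq \eta$ then forces those boundaries to be $\mu$-null. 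The time-shift tolerance built into $\hat{d}_\delta$ compensates for the remaining discontinuity of $\psi$, while the admissibility of $f$ controls the fluctuations of $\int_0^T f\circ\psi_s\,ds$ between orbits that stay $\varepsilon$-close outside $B_\varepsilon(D)$, so that the classical variational argument survives the passage to the limit.
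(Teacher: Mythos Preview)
Your route is genuinely different from the paper's, and it has a real gap. The paper never runs Katok or Misiurewicz directly on the discontinuous semiflow $\psi$; instead it passes to the quotient system $(\widetilde{X_\xi},\tilde\psi)$ of Section~\ref{sec: quotient}, proves through a chain of combinatorial lemmas (each controlling weighted spanning/separated sums via the admissibility constant $K$) that all four pressures of $(\psi,f)$ coincide with the classical pressure $P(\tilde\psi,\tilde f)$ of the \emph{continuous} semiflow $\tilde\psi$, and then simply invokes the classical variational principle for $\tilde\psi$ together with the identification $\mathcal{M}_\psi(X)\leftrightarrow\mathcal{M}_{\tilde\psi}(\widetilde{X_\xi})$ from \cite{ACS17}.

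Your direct argument breaks in both brackets at the point where the discontinuity of $\psi$ bites. For the Misiurewicz upper bound, the upper-semicontinuity step requires $\mu\bigl(\partial(\psi_1^{-i}P)\bigr)=0$ for every atom $P\in\mathcal P$ and every $i\ge 0$; in the continuous case this follows from $\partial(\psi_1^{-i}P)\subset\psi_1^{-i}(\partial P)$ together with invariance of $\mu$, but for the discontinuous $\psi_1$ that inclusion fails and $\partial(\psi_1^{-i}P)$ acquires pieces of the discontinuity locus no matter how $\mathcal P$ is assembled from flow boxes. For the Katok lower bound there is a wrong-way inclusion: since $\overline d_\delta\le d$, the $\overline d_\delta$-dynamical balls are \emph{larger} than the classical $\psi_1$-Bowen balls, so Brin--Katok for $\psi_1$ bounds the measure of the wrong sets; reversing the inclusion would need a uniform modulus of continuity for $s\mapsto\psi_s$ on $[0,\delta)$, which is exactly what fails near $D$. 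These are precisely the obstructions the quotient construction is engineered to remove, and your sketch does not supply an alternative mechanism.
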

Observe that by taking $f\equiv 0$ in the previous result we recover \cite[Theorems 3 and 4]{JSM} and, as a consequence of the proof, we recover \cite[Theorem 3]{JSM19}. Moreover, as a subproduct of this result, we get that the four (a priori different) notions of topological pressure introduced in Section \ref{sec: new top pressure} actually coincide for the class of impulsive semiflows considered in the statement. Furthermore, by combining Theorem \ref{theo: var princ} with \cite[Theorem C]{ACS17} we get the following.
\begin{corollary}\label{cor:relacao-de-tudo}
Let $P^\tau(\psi,f)$ denote the topological pressure introduced in \cite{ACS17}. Then,
\begin{displaymath}
P^\tau(\psi,f)=\overline{P}_r(\psi,f)=\overline{P}_s(\psi,f)=\hat{P}_r(\psi,f)=\hat{P}_s(\psi,f)
\end{displaymath}
for any admissible potential $f\in  \mathcal{A}(\psi)$.
\end{corollary}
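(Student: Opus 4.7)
The plan is short: the corollary asserts equality between five quantities that, once both variational principles in play are invoked, all turn out to equal the common sixth quantity $\sup_{\mu\in\mathcal{M}_\psi(X)}\{h_\mu(\psi_1)+\int f\,d\mu\}$, so the proof reduces to an application of transitivity.

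Concretely, I would first quote Theorem \ref{theo: var princ}, which has just been proved in the preceding section and which, under (C1)-(C5) and for every $f\in\mathcal{A}(\psi)$, delivers
\[
\overline{P}_r(\psi,f)=\overline{P}_s(\psi,f)=\hat{P}_r(\psi,f)=\hat{P}_s(\psi,f)=\sup_{\mu\in\mathcal{M}_\psi(X)}\Bigl\{h_\mu(\psi_1)+\int f\,d\mu\Bigr\}.
\]
Then I would invoke \cite[Theorem C]{ACS17}, a variational principle for $P^\tau$ that, under the same hypotheses (C1)-(C5) and the same class $\mathcal{A}(\psi)$ that the present paper has imported from \cite{ACS17}, asserts
\[
P^\tau(\psi,f)=\sup_{\mu\in\mathcal{M}_\psi(X)}\Bigl\{h_\mu(\psi_1)+\int f\,d\mu\Bigr\}.
\]
Equating the two right-hand sides yields the claimed chain of identifications.

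There is essentially no obstacle at this stage: all the substance lives inside Theorem \ref{theo: var princ}. The only sanity check worth performing before committing to print is that the statement of \cite[Theorem C]{ACS17} is indeed formulated with the entropy $h_\mu(\psi_1)$ of the time-one map (as opposed to some time-rescaled or flow-entropy variant) and with the same notion of admissibility; both points are guaranteed by the fact that Section 2 of the excerpt has explicitly lifted (C1)-(C5) and the class $\mathcal{A}(\psi)$ verbatim from \cite{ACS17}, so no translation between frameworks is needed. The existence of equilibrium states advertised in the introduction is then a further free consequence, inherited from \cite[Theorem C]{ACS17} via the same identification.
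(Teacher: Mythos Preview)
Your proposal is correct and matches the paper's own argument exactly: the corollary is stated immediately after the sentence ``by combining Theorem \ref{theo: var princ} with \cite[Theorem C]{ACS17} we get the following,'' which is precisely the transitivity you describe. One minor aside: the subsequent existence of equilibrium states comes from \cite[Theorem A]{ACS17} rather than Theorem C, but this does not affect the proof of the corollary itself.
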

In particular, this corollary combined with \cite[Theorem A]{ACS17} implies that whenever $\psi$ is \emph{positively expansive} and has the \emph{periodic specification property} in $\Omega_\psi \setminus D$ (see \cite{ACS17} for the precise definitions of these concepts), for any $f\in \mathcal{A}(\psi)$ there exists an \emph{equilibrium state}. That is, there exists a measure $\mu\in \mathcal{M}_{\psi}(X)$ realizing the supremum at the right-hand side of the equality given in Theorem \ref{theo: var princ}. Moreover, if $\text{dim}(X)<\infty$ and there exists $k>0$ so that $\#I^{-1}(\{y\})\leq k$ for every $y\in I(D)$, then the equilibrium state is actually unique.

\begin{remark}
The importance of Corollary \ref{cor:relacao-de-tudo} stems from the fact that, since we have five possible definitions of topological pressure for a regular impulsive semiflow and all of them give rise to the same quantity, we can choose the one that best suits the problem we are interested in.
\end{remark}
The proof of Theorem \ref{theo: var princ} is presented in Section \ref{sec: proof var princ}.

\section{Proof of Theorem \ref{theo: pressure continuous}} \label{sec: proof theo cont pressure}

In order to prove Theorem \ref{theo: pressure continuous}, we need an auxiliary result. This result also justifies the names \emph{upper} and \emph{lower} topological pressures.

\begin{lemma}\label{lemma: lower upper}
Let $\varphi $ be a semiflow acting on $X$ and $f:X\to \mathbb{R}$ be a continuous potential. Then
\begin{displaymath}
\overline{P}_r(\varphi,f)\leq \overline{P}_s(\varphi,f) \text{ and } \hat{P}_r(\varphi,f)\leq \hat{P}_s(\varphi,f).
\end{displaymath}
\end{lemma}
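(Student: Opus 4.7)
The plan is to lift to the pseudometric setting the classical argument showing that \emph{pressure via spanning sets} is dominated by \emph{pressure via separated sets}. The essential input is: for each triple $(\varepsilon,\delta,T)$, any \emph{maximal} $(\varphi,T,\varepsilon,\delta)$-separated set is automatically $(\varphi,T,\varepsilon,\delta)$-spanning, so one and the same set can be plugged simultaneously into the infimum defining $\overline{Z}_r$ and the supremum defining $\overline{Z}_s$ (and analogously for the hat versions).

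Fix $\varepsilon,\delta,T>0$. First, by Zorn's lemma applied to the poset of $(\varphi,T,\varepsilon,\delta)$-separated subsets of $X$ with respect to $\overline{d}_\delta$, ordered by inclusion, I would select a maximal such set $F$. If $F$ is infinite then $\overline{Z}_s(\varphi,f,\varepsilon,\delta,T)=+\infty$ and the desired inequality is vacuous, so assume $F$ is finite. Next I would verify $F$ is $(\varphi,T,\varepsilon,\delta)$-spanning: for $x\in F$ just take $y=x$ and use $\overline{d}_\delta(\varphi_t(x),\varphi_t(x))=0<\varepsilon$ (immediate from taking $s_1=s_2=0$ in the definition of $\overline{d}_\delta$); for $x\notin F$, maximality forces $F\cup\{x\}$ to fail separation, producing some $y\in F$ with $\overline{d}_\delta(\varphi_t(x),\varphi_t(y))<\varepsilon$ for every $t\in[0,T]$.

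With this $F$ in hand, regarding it once as a spanning set and once as a separated set yields
\[
\overline{Z}_r(\varphi,f,\varepsilon,\delta,T)\;\leq\;\sum_{x\in F}\exp\!\left(\int_0^T f(\varphi_s(x))\,ds\right)\;\leq\;\overline{Z}_s(\varphi,f,\varepsilon,\delta,T).
\]
Applying $\frac{1}{T}\log$, then $\limsup_{T\to\infty}$, then $\lim_{\varepsilon\to 0^+}$, and finally $\lim_{\delta\to 0^+}$ preserves $\leq$ at every stage and delivers $\overline{P}_r(\varphi,f)\leq\overline{P}_s(\varphi,f)$. The bound $\hat{P}_r(\varphi,f)\leq\hat{P}_s(\varphi,f)$ follows verbatim by replacing $\overline{d}_\delta$ with $\hat{d}_\delta$ throughout (using $\hat{d}_\delta(z,z)=0$ via $s=0$).

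I do not foresee a genuine obstacle. The one point requiring mild vigilance is that $\overline{d}_\delta$ and $\hat{d}_\delta$ are only pseudometrics on $X$, so there is no a priori bound on the cardinality of a separated set; this is why I flagged the infinite case and dispatched it as trivial. Once that is out of the way, the remainder is routine bookkeeping in the spirit of the classical Walters--Katok style comparison.
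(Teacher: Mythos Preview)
Your argument is correct and follows essentially the same route as the paper's proof: apply Zorn's lemma to obtain a maximal $(\varphi,T,\varepsilon,\delta)$-separated set, observe that maximality forces it to be spanning, deduce $\overline{Z}_r\leq\overline{Z}_s$, and pass to the limits. Your explicit handling of the case where the maximal separated set is infinite is a small piece of extra care that the paper omits, but otherwise the two proofs are the same.
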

\begin{proof}
We prove only the assertion about $\overline{P}_r$ and $\overline{P}_s$. The other one is similar. We follow the classical approach used to prove that the topological entropy defined via spanning sets is smaller than or equal to the topological entropy defined via separated sets (see \cite{JSM19,VK16,W82}).
Fix $\delta>0$, $\varepsilon >0$ and $T>0$ and consider the partial order given by set inclusion on the set of all $(\varphi, T,\varepsilon, \delta)$-separated sets with respect to $\overline{d}_\delta$. Now, since the union of a partially ordered family of separated sets is still a separated set, it follows from Zorn's Lemma that there exists a maximal $(\varphi, T,\varepsilon, \delta)$-separated set $E$. We observe that this set $E$ is also a $(\varphi, T,\varepsilon, \delta)$-spanning set. Indeed, suppose that $E$ is not a $(\varphi,T,\varepsilon,\delta)$-spanning set. Then, there exists $x\in X$ such that for all $y\in E$, there exists $t\in[0,T]$ such that
\begin{displaymath}
\overline{d}_\delta(\varphi_t(x),\varphi_t(y))\geq \varepsilon.
\end{displaymath}
In particular, $E\cup\{x\}$ is a $(\varphi,T,\varepsilon,\delta)$-separated set contradicting $E$'s maximality. Consequently, for any $f\in \mathcal{A}(\psi) $ we have that
\begin{displaymath}
\overline{Z}_r(\varphi,f,\varepsilon,\delta,T) \leq \overline{Z}_s(\varphi,f,\varepsilon,\delta,T) .
\end{displaymath}
Thus, by taking logarithm on both sides of the inequality, dividing by $T$ and letting $T\to+\infty$, $\varepsilon\to 0^+$ and $\delta \to 0^+$ it follows that
\begin{displaymath}
\overline{P}_r(\varphi,f)\leq \overline{P}_s(\varphi,f)
\end{displaymath}
as claimed.
\end{proof}

\begin{proof}[Proof of Theorem \ref{theo: pressure continuous}]
Once again we prove only the assertions involving $\overline{P}_r$ and $\overline{P}_s$ since the ones about $\hat{P}_r$ and $\hat{P}_s$ are similar.

Fix $\delta>0$ and $\varepsilon>0$. We start observing that, for $T>0$, if $F$ is a $(\varphi,T,\varepsilon,\delta)$-separated set with respect to $\overline{d}_\delta$ then $F$ is a $(\varphi,T,\varepsilon)$-separated set. In fact, this follows easily from the fact that $\overline{d}_\delta(x,y)\leq d(x,y)$ for every $x,y\in X$. Thus,
\begin{displaymath}
\overline{Z}_s(\varphi,f,\varepsilon,\delta,T)
\leq Z_s(\varphi,f,\varepsilon, T),
\end{displaymath}
which gives us
\begin{equation}\label{eq: aux ps}
\overline{P}_s(\varphi,f)\leq  P_s(\varphi,f).
\end{equation}
In what follows we are going to show that
\begin{align}\label{eq:4.1}
\overline P_r(\varphi,f)\geq  P_r(\varphi,f).
\end{align}

By the continuity of $\varphi$ and the compactness of $X$, given $\alpha >0$ there exists $\beta=\beta(\alpha)>0$ so that for all $x\in X$ and $t\geq 0$, we have
\begin{equation}\label{eq:1}
    u\in [t,t+\beta)\Rightarrow d(\varphi_t(x),\varphi_u(x))<\frac{\alpha}{4}.
\end{equation}
 Now, let $\varepsilon >0$, $\delta>0$ and $T>0$ be numbers satisfying $\delta<\beta$ and $\varepsilon<\frac{\alpha}{2}$. We observe that if $E$ is a $(\varphi,T,\varepsilon,\delta)$-spanning set with respect to $\overline{d}_\delta$, then $E$ is a $(\varphi,T,\alpha)$-spanning set. In fact, suppose this is not the case. Then, there exists $x\in X$ such that for all $y\in E$,  there exists $t_0\in [0,T] $ with $d(\varphi_{t_0},(x),\varphi_{t_0}(y))\geq \alpha$. Using the fact that
\begin{displaymath}
\alpha\leq d(\varphi_{t_0}(x),\varphi_{t_0}(y))\leq d(\varphi_{t_0}(x),\varphi_{u}(y))+d(\varphi_{u}(x),\varphi_{s}(y))+d(\varphi_{s}(y),\varphi_{t_0}(y)),
\end{displaymath}
 and \eqref{eq:1} we get that for all $u,s\in [ t,t+\beta)$,
\begin{displaymath}
d(\varphi_u(x),\varphi_s(y))>\frac{\alpha}{2}.
\end{displaymath}
By the choice of $\delta$ and $\varepsilon$, this last inequality implies that
\begin{displaymath}
\overline{d}_{\delta}(\varphi_{t_0}(x),\varphi_{t_0}(y))>\varepsilon,
\end{displaymath}
which is a contradiction. So, any $(\varphi,T,\varepsilon,\delta)$-spanning set with respect to $\overline{d}_\delta$ is a $(\varphi,T,\alpha)$-spanning set, which ensures that
\begin{displaymath}
\overline{Z}_r(\varphi,f,\varepsilon,\delta,T)\geq Z_r(\varphi,f,\alpha,T)
\end{displaymath}
and completes the proof of \eqref{eq:4.1}. Thus, combining \eqref{eq:4.1}, Lemma \ref{lemma: lower upper} and \eqref{eq: aux ps} we get that
\begin{displaymath}
P_r(\varphi,f)\leq \overline P_r(\varphi,f)\leq \overline P_s(\varphi,f)\leq P_s(\varphi,f).
\end{displaymath}
Then, since for a continuous semiflow we have that $P_s(\varphi,f)=P_r(\varphi,f)=P(\varphi,f)$, we conclude that
\begin{displaymath}
P(\varphi,f)= \overline P_r(\varphi,f)= \overline P_s(\varphi,f)
\end{displaymath}
as claimed.
\end{proof}

\section{Proof of the Variational Principle} \label{sec: proof var princ}
In this section we prove the variational principle. With this purpuse in mind, we start recalling a useful construction introduced in \cite{AC14}. We follow the presentation given in \cite{ACS17}.
\subsection{Quotient dynamics} \label{sec: quotient}
Given an impulsive dynamical system $(X,\varphi,D,I)$, let us consider the quotient space $\widetilde{X}=X/\sim$, where $\sim$ is the equivalence relation on $X$ given by
\begin{displaymath}
 x\sim y\quad \Leftrightarrow \quad x=y, \quad y=I(x),\quad x=I(y)\quad \text{or}\quad I(x)=I(y).
\end{displaymath}
We denote by $\tilde{x}$ the equivalence class of $x\in X$. Let us consider $\widetilde{X}$ endowed with the quotient topology and let $\pi:X\to \widetilde{X}$ be the natural projection. A simple observation (see \cite[Lemma 4.1]{AC14}) is that this space is a metrizable space and, moreover, a metric $\tilde{d}$ on $\widetilde{X}$ that induces the quotient topology is given by
\begin{displaymath}
 \tilde{d} (\tilde{x},\tilde{y}) = \inf  \{d(p_1, q_1)+ d(p_2, q_2)+\cdots +d(p_n,q_n)\},
\end{displaymath}
where $p_1, q_1, \dots, p_n, q_n$ is any chain of points in $X$ such that $ p_1 \sim x$, $q_1 \sim p_2$, $q_2 \sim p_3$, ... $q_n \sim y$. In particular, we have for every $x,y\in X$,
\begin{displaymath}
\tilde{d}(\tilde{x},\tilde{y}) \leq d(x,y).
\end{displaymath}
Moreover, whenever $I$ does not expand distances we can get a bi-Lipschitz relation between $d$ and $\tilde{d}$ in the following sense: if $\text{Lip}(I)\leq 1$, then for all $\tilde{x}, \tilde{y} \in \pi(X)$ there exist $p, q \in X$ such that
$p \sim x$, $  q \sim y$ and $ d(p,q) \leq 2\,\tilde{d}(\tilde{x}, \tilde{y})$ (see \cite[Lemma 4.1]{ACV15}).

Our objective now is to construct a dynamics on a subset of $\pi(X)$ that is induced by $\psi$. The advantage of considering such induced dynamics is that it will be a continuous one defined on a compact metric space for which there are plenty of results from Ergodic Theory available. The general idea then is to pull this results back to the original dynamics (which is not necessarily continuous) via (semi)conjugacy. This idea has already been explored in some previous works that motivated our own (see for instance, \cite{AC14,ACS17,ACV15,JSM19}). In order to put this idea to work, take $\xi>0$ such that conditions (C1)-(C5) hold. Since $I(D)\cap D = \emptyset$, each point in the set $X_\xi=X\setminus (D_\xi\cup D)$ has a representative of its equivalence class in $X\setminus D_\xi $. This implies that
\begin{displaymath}
\pi(X_\xi)=\pi(X\setminus D_\xi),
\end{displaymath}
which, by condition (C3), is a compact set. Moreover, as we are assuming that $\varphi$ satisfies a $\xi$-half-tube condition, it follows that $X_\xi$ is $\psi_t$-invariant (recall \eqref{eq: forward inv}) and, since there is no risk of confusion, we will still denote the restriction of $\psi$ to $X_\xi$ by $\psi$.

Given $x,y\in X_\xi$ we have $x\sim y$ if and only if $x=y$. In particular, $\pi_{|X_\xi}$ induces a continuous bijection from $X_\xi$ onto the set
\begin{displaymath}
\widetilde{X_\xi}=\pi(X_\xi)
\end{displaymath}
that we are going to denote by $H$. This map $H$ allows us to introduce a semiflow
$\tilde{\psi}$ on $ \widetilde{X_\xi} $
given by
\begin{displaymath}
\tilde{\psi}(t,\tilde x)=H \circ\psi(t,x),
\end{displaymath}
for all $x\in X_\xi$ and  $t\geq 0$. Since the impulsive semiflow $\psi$ satisfies conditions (C1) and (C5), it follows from \cite[Lemma 4.2]{ACV15} that the semiflow $\tilde{\psi}$ is continuous. In particular, this is the induced dynamics that we were looking for. Moreover, from the definition of the map $H$, we have a  semiconjugacy between the semiflows $\psi$ and $\tilde{\psi}$. That is,
\begin{displaymath}
\tilde{\psi}_t( H (x) )= H(\psi_t (x))
\end{displaymath}
for every $x\in X_\xi$ and $t\geq 0$. The main features of these constructions can be summarized as follows.

\begin{proposition}\label{prop: conjugacy}
Let $(X,d)$ be a compact metric space and $\psi$ be  the semiflow associated to an impulsive dynamical system $(X,\varphi,D,I) $ for which conditions (C1)-(C5) are satisfied. Then, there exists a compact metric space $(\widetilde{X_\xi},\tilde{d})$, a continuous semiflow $\tilde{\psi}:\mathbb{R}_0^+\times \widetilde{X_\xi}\to \widetilde{X_\xi}$ and a uniformly continuous bijection $H:X_\xi\to \widetilde{X_\xi}$ so that for all $t\geq 0$
\begin{displaymath}
\tilde{\psi}_t\circ H=H\circ \psi_t.
\end{displaymath}

\end{proposition}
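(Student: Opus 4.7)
The proposition is essentially a digest of the construction carried out in Section 4.1, so my plan is to assemble its four ingredients one at a time, invoking the already-cited lemmas at the places where the real work has been done.

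First I would fix $\xi>0$ small enough for (C1)--(C5) to hold and for $I(D)\cap D_\xi=\emptyset$, and set $\widetilde{X_\xi}:=\pi(X_\xi)$ endowed with the quotient metric $\tilde d$ recalled in the excerpt. By (C3) the set $D_\xi$ is open, so $X\setminus D_\xi$ is closed in $X$ and hence compact; combined with the identity $\pi(X_\xi)=\pi(X\setminus D_\xi)$ noted above and the continuity of $\pi$, this gives that $(\widetilde{X_\xi},\tilde d)$ is a compact metric space.

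Next I would verify that $H:=\pi|_{X_\xi}$ is a bijection onto $\widetilde{X_\xi}$. Surjectivity is by definition. For injectivity, I would take $x,y\in X_\xi$ with $x\sim y$ and rule out the three nontrivial cases: each of $y=I(x)$, $x=I(y)$, and $I(x)=I(y)$ forces one of $x,y$ to lie in $D$, contradicting $X_\xi\cap D=\emptyset$. Uniform continuity of $H$ is then immediate, since the inequality $\tilde d(\tilde x,\tilde y)\le d(x,y)$ makes $H$ a $1$-Lipschitz map.

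Then I would define $\tilde\psi_t:=H\circ\psi_t\circ H^{-1}$ on $\widetilde{X_\xi}$. This is legitimate because \eqref{eq: forward inv} tells us that $\psi_t(X_\xi)\subseteq X_\xi$, so $\psi_t\circ H^{-1}$ lands in the domain of $H$; the semiflow identities for $\tilde\psi$ are transported from those of $\psi$ via $H$, and the semiconjugacy $\tilde\psi_t\circ H=H\circ\psi_t$ is then tautological from the definition.

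The only genuinely nontrivial step is continuity of $\tilde\psi$, which is where I expect the main obstacle to lie. This is where hypotheses (C1) and (C5) do the real work: (C1) supplies the bi-Lipschitz-type comparison between $d$ and $\tilde d$ on suitable representatives recalled in the text after \cite[Lemma 4.1]{ACV15}, while (C5) provides the continuity of the first-hit time $\tau_\xi^*$ that is needed to control how trajectories cross $D$. These are exactly the assumptions of \cite[Lemma 4.2]{ACV15}, which I would simply quote to conclude that $\tilde\psi$ is continuous on $\mathbb{R}_0^+\times\widetilde{X_\xi}$. Packaging the four items together then yields the proposition.
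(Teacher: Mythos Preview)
Your proposal is correct and follows essentially the same route as the paper: the proposition is presented there as a summary of the preceding construction in Section~\ref{sec: quotient}, and your four-step assembly (compactness of $\widetilde{X_\xi}=\pi(X\setminus D_\xi)$ via (C3), bijectivity and $1$-Lipschitz continuity of $H=\pi|_{X_\xi}$, the definition $\tilde\psi_t=H\circ\psi_t\circ H^{-1}$, and continuity via \cite[Lemma~4.2]{ACV15} using (C1) and (C5)) mirrors exactly what the paper does, with a bit more detail spelled out in the injectivity check.
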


\subsection{Lemmata}\label{subsec: lemmata}
In this subsection we present several useful auxiliary results. We retain all the notation already introduced and denote the pseudometrics $\overline{d}^{\psi}_\delta$ and $\hat{d}^{\psi}_\delta$ introduced in Section \ref{sec: new top pressure} simply by $\overline{d}_\delta$ and $\hat{d}_\delta$, respectively. Moreover, we fix $\xi$ sufficiently small so that $0<\xi<\frac{\xi_0}{4}$, where $\xi_0$ is given in properties (C1)-(C5) and, for $f\in \mathcal{A}(\psi)$, we consider
\begin{displaymath}
\tilde{f}:=f\circ H^{-1}=f\circ (\pi_{|X_\xi})^{-1}.
\end{displaymath}
We start with a very simple and general observation relating the notions of topological pressure defined using $\overline{d}_\delta$ and $\hat{d}_\delta$.
\begin{lemma}\label{lemma: Pover x Phat}
\begin{displaymath}
\overline{P}_r(\psi,f)\leq \hat{P}_r(\psi,f) \text{ and } \overline{P}_s(\psi,f)\leq \hat{P}_s(\psi,f).
\end{displaymath}
\end{lemma}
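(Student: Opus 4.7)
The plan is to reduce both inequalities to a single pointwise comparison of pseudometrics. Observe that for every $x,y\in X$ and every $\delta>0$ we have
\[
\overline{d}_\delta(x,y)
=\inf\{d(\psi_{s_1}(x),\psi_{s_2}(y)):s_1,s_2\in[0,\delta)\}
\leq
\inf\{d(\psi_{s}(x),\psi_{s}(y)):s\in[0,\delta)\}
=\hat{d}_\delta(x,y),
\]
because the infimum in the definition of $\hat{d}_\delta$ is taken over the diagonal $s_1=s_2=s$ of the larger parameter set used for $\overline{d}_\delta$. This pointwise inequality is the only substantive ingredient; the rest is an unwinding of the definitions of separated/spanning sets and the monotonicity of the relevant infima and suprema.

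For the spanning version, fix $T>0$, $\varepsilon>0$ and $\delta>0$, and let $E\subset X$ be any $(\psi,T,\varepsilon,\delta)$-spanning set with respect to $\hat{d}_\delta$. Then for every $x\in X$ there exists $y\in E$ such that $\hat{d}_\delta(\psi_t(x),\psi_t(y))<\varepsilon$ for all $t\in[0,T]$. By the pointwise inequality above, the same $y$ also satisfies $\overline{d}_\delta(\psi_t(x),\psi_t(y))<\varepsilon$ for all $t\in[0,T]$, so $E$ is $(\psi,T,\varepsilon,\delta)$-spanning with respect to $\overline{d}_\delta$. Since every $\hat{d}_\delta$-spanning set is an $\overline{d}_\delta$-spanning set, taking infima over a larger family yields a smaller value, namely
\[
\overline{Z}_r(\psi,f,\varepsilon,\delta,T)\leq \hat{Z}_r(\psi,f,\varepsilon,\delta,T).
\]
Taking logarithms, dividing by $T$, and passing to the appropriate limits in $T$, $\varepsilon$ and $\delta$ then gives $\overline{P}_r(\psi,f)\leq \hat{P}_r(\psi,f)$.

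For the separated version, fix the same parameters and let $F\subset X$ be any $(\psi,T,\varepsilon,\delta)$-separated set with respect to $\overline{d}_\delta$. Given $x_1\neq x_2$ in $F$, there exists $t\in[0,T]$ with $\overline{d}_\delta(\psi_t(x_1),\psi_t(x_2))\geq\varepsilon$, and the pointwise inequality gives $\hat{d}_\delta(\psi_t(x_1),\psi_t(x_2))\geq\varepsilon$ for the same $t$. Hence $F$ is also $(\psi,T,\varepsilon,\delta)$-separated with respect to $\hat{d}_\delta$, and taking suprema over a larger family gives
\[
\overline{Z}_s(\psi,f,\varepsilon,\delta,T)\leq \hat{Z}_s(\psi,f,\varepsilon,\delta,T).
\]
Passing to limits as before yields $\overline{P}_s(\psi,f)\leq \hat{P}_s(\psi,f)$, completing the proof. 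There is no real obstacle here: the statement is a general comparison valid for any semiflow, with no use of the impulsive structure, and the only idea is the obvious inequality $\overline{d}_\delta\leq \hat{d}_\delta$.
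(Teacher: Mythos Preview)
Your proof is correct and follows exactly the paper's approach: the authors simply note that the inequality follows easily since $\overline{d}_\delta(x,y)\leq \hat{d}_\delta(x,y)$ for every $x,y\in X$. Your write-up merely spells out in detail the straightforward unwinding of definitions that the paper leaves implicit.
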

\begin{proof}
It follows easily since $\overline{d}_\delta(x,y)\leq \hat{d}_\delta(x,y)$ for every $x,y\in X$.
\end{proof}

From the observations of Section \ref{sec: quotient}, we know that $X_\xi$ is $\psi_t$-invariant. Our next two results relate the pressures of $\psi$ and $\psi_{|X_\xi}$.

\begin{lemma}\label{lemma: Pover restric leq Pover psi}
\begin{displaymath}
\overline{P}_r(\psi_{|X_\xi},f)\leq \overline{P}_r(\psi,f).
\end{displaymath}
\end{lemma}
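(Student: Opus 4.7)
My plan is to transport a near-optimal $(\psi,T,\varepsilon,\delta)$-spanning set of $X$ to a $(\psi_{|X_\xi},T,\varepsilon',\delta)$-spanning set of $X_\xi$ whose weighted sum is comparable. Concretely, fix $T,\varepsilon,\delta>0$ and choose $E\subset X$ a $(\psi,T,\varepsilon/2,\delta)$-spanning set with respect to $\overline{d}_\delta$ satisfying $\sum_{y\in E}\exp\bigl(\int_0^T f(\psi_s(y))\,ds\bigr)\leq \overline{Z}_r(\psi,f,\varepsilon/2,\delta,T)+1$. For each $y\in E$ whose ``Bowen ball''
$B_y=\{z\in X:\overline{d}_\delta(\psi_t(y),\psi_t(z))<\varepsilon/2 \text{ for all } t\in[0,T]\}$
meets $X_\xi$, I would select a representative $y'\in B_y\cap X_\xi$, and collect these choices into $E'\subset X_\xi$.

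Next I would argue that $E'$ is a $(\psi_{|X_\xi},T,\varepsilon,\delta)$-spanning set of $X_\xi$: given $x\in X_\xi\subset X$, the spanning property of $E$ yields some $y\in E$ with $x\in B_y$, so $B_y\cap X_\xi\ni x$ is nonempty, the point $y'$ is defined, and both $x$ and $y'$ lie in the same ``Bowen ball'' around $y$. Combining the two closeness estimates should deliver $\overline{d}_\delta(\psi_t(y'),\psi_t(x))<\varepsilon$ for every $t\in[0,T]$. In parallel, uniform continuity of $f$ on the compact space $X$ would let me compare the orbital integrals for $y$ and $y'$ up to an error $\eta(\varepsilon,\delta)\,T+O(\delta\|f\|_\infty)$, with $\eta(\varepsilon,\delta)\to 0$ as $\varepsilon,\delta\to 0^+$, because being in $B_y$ produces, at each time $t$, a pair of shifts $s_1,s_2\in[0,\delta)$ realizing $d$-closeness, so a change of variables absorbs the drift. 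This yields
\begin{displaymath}
\overline{Z}_r(\psi_{|X_\xi},f,\varepsilon,\delta,T)\leq e^{\eta(\varepsilon,\delta)T+O(\delta)}\,\overline{Z}_r(\psi,f,\varepsilon/2,\delta,T);
\end{displaymath}
taking $\log$, dividing by $T$, and then sending $T\to\infty$, $\varepsilon\to 0^+$, $\delta\to 0^+$ in that order delivers the claimed inequality.

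The main obstacle is the ``two points in the same Bowen ball are close'' step, because $\overline{d}_\delta$ does not enjoy a sharp triangle inequality: the minimizing pairs of shifts $s_1,s_2\in[0,\delta)$ realizing $\overline{d}_\delta(\psi_t(y),\psi_t(x))<\varepsilon/2$ and $\overline{d}_\delta(\psi_t(y),\psi_t(y'))<\varepsilon/2$ need not be compatible, and the discontinuity of $\psi$ at impulsive times prevents freely reconciling them. I expect the cleanest way to settle this is to pass through the continuous quotient semiflow $\tilde{\psi}$ on the compact metric space $(\widetilde{X_\xi},\tilde d)$ given by Proposition~\ref{prop: conjugacy}, exploiting that on $X_\xi$ the impulsive dynamics is conjugate to a continuous one whose metric does satisfy the triangle inequality and whose orbits are uniformly continuous in time, so that the standard arguments apply and the resulting spanning set can then be pulled back to $X_\xi$ through $H^{-1}$.
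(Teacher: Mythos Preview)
Your approach has two genuine gaps that the paper's argument avoids.

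\textbf{The triangle inequality problem is not a side issue; it is the crux.} You correctly observe that $\overline{d}_\delta$ fails the triangle inequality: from $\overline{d}_\delta(\psi_t(y),\psi_t(x))<\varepsilon/2$ and $\overline{d}_\delta(\psi_t(y),\psi_t(y'))<\varepsilon/2$ you cannot conclude $\overline{d}_\delta(\psi_t(x),\psi_t(y'))<\varepsilon$, because the minimizing shifts for the two pairs need not match at the intermediate point $\psi_t(y)$. Your proposed rescue via the quotient semiflow $\tilde\psi$ on $\widetilde{X_\xi}$ does not work here: the conjugacy $H$ is only defined on $X_\xi$, while your spanning set $E$ lives in all of $X$, so you cannot push $E$ to the quotient without first landing in $X_\xi$ --- which is precisely the step you are trying to justify. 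The quotient is useful for comparing $\psi_{|X_\xi}$ with $\tilde\psi$ (as in Lemmas~\ref{lemma: ineq conjugacy} and~\ref{lem: Ps psi tilde and Ps psi restrita}), not for comparing $\psi$ on $X$ with $\psi_{|X_\xi}$.

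\textbf{The integral comparison is also unjustified.} Your sketch proposes to bound $\bigl|\int_0^T f(\psi_s(y))\,ds-\int_0^T f(\psi_s(y'))\,ds\bigr|$ by $\eta(\varepsilon,\delta)T$ using uniform continuity of $f$ and a ``change of variables'' absorbing the drift. But the minimizing shifts $s_1(t),s_2(t)$ depend on $t$ in an uncontrolled (possibly non-measurable) way, and the discontinuity of $\psi$ at impulsive times prevents you from sliding along the orbit to reconcile them. The paper does not attempt this; it uses instead the admissibility hypothesis $f\in\mathcal{A}(\psi)$, which gives a \emph{uniform} constant $K$ independent of $T$.

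The paper sidesteps both obstacles by a different construction: rather than picking an arbitrary representative $y'\in B_y\cap X_\xi$, it sets $y':=\psi_{t(y)}(y)$ with $t(y)=\inf\{t\ge 0:\psi_t(y)\in X_\xi\}\le\xi$. Then $y$ and $y'$ lie on the \emph{same} orbit, a small time apart; this makes the spanning property (with slightly modified parameters, citing \cite{JSM}) and the integral comparison (via admissibility of $f$) both tractable, yielding $\overline{Z}_r(\psi_{|X_\xi},f,\varepsilon,\delta,T-2\delta)\le e^K\,\overline{Z}_r(\psi,f,\varepsilon,\delta,T)$ with $K$ independent of $T$.
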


\begin{proof}
Given $\varepsilon>0$, $\delta>0$ and $T >0$, let $F\subset X$ be a $(\psi, T,\varepsilon,\delta)$-spanning set with respect to $\overline{d}_\delta$ and consider
\begin{displaymath}
F':=\left\{\psi_{t(y)}(y);\; y\in F\right\},
\end{displaymath}
where $t(y):=\inf \{t\geq 0; \psi_{t(y)}(y)\in X_\xi\}$. From the proof of \cite[Proposition 1]{JSM} we know that whenever $\varepsilon$ and $\delta$ are sufficiently small and $T> 2\delta$, $F'$ is a $(\psi_{|X_\xi}, T-2\delta,\varepsilon,3\delta)$-spanning set with respect to $\overline{d}_\delta$.

Now, since $f \in \mathcal{A}(\psi)$, there exist $\rho>0$ and $K>0$ such that for every $t>0$ we have
\begin{equation}\label{eq: estimative integral f}
\left|\int_0^t f(\psi_s(x))\,ds - \int_0^t f(\psi_s(y))\,ds\right|< K,
\end{equation}
whenever $d(\psi_{s}(x), \psi_s(y)) < \rho$ for all $ s \in [0,t]$ such that $\psi_s(x),\psi_s(y)\notin B_{\rho}(D)$. Moreover, using continuity of $\varphi$ and compactness of $X$ (see \cite[Lemma 2.1]{ACV15}) we can find $\alpha>0$ such that $d(\varphi_s(x),\varphi_u(x))<\rho$ for all $x\in X$ and  $s,u\geq 0$ with $|s-u|<\alpha$. Therefore, choosing  $0<\xi<\alpha$, we have
\begin{displaymath}
d(\psi_{s}(x), \psi_s(\psi_\xi (x))) =d(\varphi_{s}(x), \varphi_s(\varphi_\xi (x))) < \rho
\end{displaymath}
for all $s \in [0,t]$ such that $\psi_s(x),\psi_s(\psi_\xi(x))\notin B_{\rho}(D)$. Consequently, noting that $t(x)\leq \xi$ for every $x \in F\cap (D\cup D_\xi)$, we get that
\begin{displaymath}
\sum_{x \in F'\setminus F} \exp\left(\int_0^t f(\psi_s(x))ds\right)=
\sum_{y \in  F\cap (D\cup D_\xi)} \exp\left(\int_0^t f(\psi_s(\psi_{t(y)}(y)))ds\right)
\end{displaymath}
is equal to
\begin{displaymath}
 \sum_{y \in F\cap (D\cup D_\xi)}\exp\left( \int_0^t f(\psi_s(\psi_{t(y)}(y)))ds-\int_0^t f(\psi_s(y))ds+\int_0^t f(\psi_s(y))ds \right),
\end{displaymath}
which is smaller than or equal to
\begin{displaymath}
\sum_{y \in F\cap (D\cup D_\xi)} \exp\left(\left| \int_0^t f(\psi_s(\psi_{t(y)}(y)))ds -\int_0^t f(\psi_s(y))ds \right|\right) \exp\left(\int_0^t f(\psi_s(y))ds\right),
\end{displaymath}
which by its turn is smaller than or equal to
\begin{displaymath}
  e^K \sum_{ x \in F\cap (D\cup D_\xi)}\exp\left(\int_0^t f(\psi_s(y))ds\right).
\end{displaymath}

Thus, as
\begin{displaymath}
\sum_{x \in F} \exp\left({\int_0^t f(\psi_s(x))ds}\right) = \sum_{x \in F\cap (D\cup D_\xi)} \exp\left({\int_0^t f(\psi_s(x))ds}\right) + \sum_{x \in F\cap X_\xi} \exp\left({\int_0^t f(\psi_s())ds}\right)
\end{displaymath}
and, similarly,
\begin{displaymath}
\sum_{x \in F'} \exp\left({\int_0^t f(\psi_s(x))ds}\right) = \sum_{x \in F'\setminus F} \exp\left({\int_0^t f(\psi_s(x))ds}\right) + \sum_{x \in F\cap X_\xi} \exp\left({\int_0^t f(\psi_s(x))ds}\right)
\end{displaymath}
and using the previous observation it follows that
\begin{displaymath}
\sum_{x \in F'} \exp\left({\int_0^t f(\psi_s(x))ds}\right) \leq e^K  \sum_{x \in F} \exp\left({\int_0^t f(\psi_s(y))ds}\right).
\end{displaymath}
Therefore,
\begin{displaymath}
\overline{Z}_r(\psi_{|X_\xi},f,\varepsilon,\delta,T-2\delta)\leq e^K \overline{Z}_r(\psi,f,\varepsilon,\delta,T)
\end{displaymath}
for every $T>2\delta$ and $\varepsilon$ and $\delta$ sufficiently small. Consequently, by taking logarithm on both sides, dividing by $T$ and making $T\to+\infty$, $\varepsilon\to 0^+$ and $\delta \to 0^+$ it follows that
\begin{displaymath}
\overline{P}_r(\psi_{|X_\xi},f)\leq \overline{P}_r(\psi,f).
\end{displaymath}
\end{proof}

\begin{lemma}\label{lem: Ps psi and Ps psi restrita}
\begin{displaymath}
\hat{P}_s(\psi_{|X_\xi},f) = \hat{P}_s(\psi,f).
\end{displaymath}
\end{lemma}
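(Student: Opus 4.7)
The plan is to prove the equality by establishing the two opposite inequalities.

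\textbf{Easy direction $\hat{P}_s(\psi_{|X_\xi},f)\le \hat{P}_s(\psi,f)$.} I would first observe that, since $X_\xi$ is forward $\psi$-invariant by \eqref{eq: forward inv} and both the dynamics and the pseudometric $\hat{d}_\delta$ are defined intrinsically from the $\psi$-trajectories, any $(\psi_{|X_\xi},T,\varepsilon,\delta)$-separated set $F\subset X_\xi$ with respect to $\hat{d}_\delta$ is automatically a $(\psi,T,\varepsilon,\delta)$-separated set in $X$ with the same associated exponential weight. Consequently $\hat{Z}_s(\psi_{|X_\xi},f,\varepsilon,\delta,T)\le \hat{Z}_s(\psi,f,\varepsilon,\delta,T)$, and taking logarithms, dividing by $T$, and passing to the successive limits $T\to\infty$, $\varepsilon\to 0^+$, $\delta\to 0^+$ delivers the inequality.

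\textbf{Hard direction $\hat{P}_s(\psi,f)\le \hat{P}_s(\psi_{|X_\xi},f)$.} Here I would mirror the construction used in Lemma \ref{lemma: Pover restric leq Pover psi}. Starting from a $(\psi,T,\varepsilon,\delta)$-separated set $F\subset X$ with respect to $\hat{d}_\delta$, I would define $F':=\{\psi_{t(y)}(y):y\in F\}\subset X_\xi$, where $t(y):=\inf\{t\ge 0:\psi_t(y)\in X_\xi\}$. Conditions (C3)--(C4) together with the continuity of $\tau^*_\xi$ from (C5) guarantee that $t(y)$ is uniformly bounded (by $\xi$ for $y\in D_\xi$ and by $\sup_{x\in D}\tau_1(x)<\infty$ for $y\in D$). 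The remainder then splits into three tasks: (i) show that $F'$ is a $(\psi_{|X_\xi},T-2\xi,\varepsilon',3\delta)$-separated set with respect to $\hat{d}_{3\delta}$ for some $\varepsilon'\le\varepsilon$, by exploiting uniform continuity of $\varphi$ on bounded time intervals to transport separation of $y_1,y_2$ into separation of their shifts $\psi_{t(y_1)}(y_1),\psi_{t(y_2)}(y_2)$; (ii) verify that $y\mapsto \psi_{t(y)}(y)$ is injective on $F$ (or at worst of bounded multiplicity) so that counting is essentially preserved; (iii) apply the admissibility bound \eqref{eq: estimative integral f} to the pairs $(y,\psi_{t(y)}(y))$, whose $\psi$-trajectories stay within the admissibility scale once $\xi$ is small enough (exactly as in Lemma \ref{lemma: Pover restric leq Pover psi}), to conclude
\begin{equation*}
\sum_{y\in F}\exp\!\left(\int_0^T f(\psi_s(y))\,ds\right)\le e^K\sum_{z\in F'}\exp\!\left(\int_0^T f(\psi_s(z))\,ds\right).
\end{equation*}
Combining these three ingredients would yield $\hat{Z}_s(\psi,f,\varepsilon,\delta,T)\le e^K\hat{Z}_s(\psi_{|X_\xi},f,\varepsilon',3\delta,T-2\xi)$, and the usual limit procedure ($T\to\infty$, then $\varepsilon\to 0^+$ — which forces $\varepsilon'\to 0^+$ — then $\delta\to 0^+$) produces the desired inequality.

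\textbf{Main obstacle.} Task (i) is the genuinely delicate step, and I expect it to be the main difficulty. The pseudometric $\hat{d}_\delta$ enforces a \emph{synchronized} window $s\in[0,\delta)$ applied to both trajectories simultaneously, so when $t(y_1)\ne t(y_2)$ the synchronization inherited from the separation of the original pair is broken; this is precisely why Lemma \ref{lemma: Pover restric leq Pover psi} was formulated for the laxer pseudometric $\overline{d}_\delta$. The plan to overcome this is to enlarge the window from $\delta$ to $3\delta$ and use that $|t(y_1)-t(y_2)|$ is controlled by $\xi$: choosing $\delta$ comparable to $\xi$ allows one to locate a synchronized parameter inside $[0,3\delta)$ at which the shifted trajectories remain at distance at least $\varepsilon'=\varepsilon-\alpha$, with the slack $\alpha$ coming from the uniform continuity of $\varphi$ on $[0,\xi]$ used in a manner analogous to the proof of Theorem \ref{theo: pressure continuous}. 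Once (i) is in place, the admissibility comparison in (iii) is a routine adaptation of the corresponding calculation in the proof of Lemma \ref{lemma: Pover restric leq Pover psi}.
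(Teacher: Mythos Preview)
Your easy direction is fine and matches the paper. The hard direction, however, follows a genuinely different route from the paper and, as you yourself suspect, the obstacle in task (i) is real and your proposed workaround does not close it.

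The difficulty is this: in the definition of $\hat{P}_s$ one must let $\delta\to 0^+$ \emph{after} fixing $\xi$ (the space $X_\xi$ is fixed throughout the lemma). Your plan requires ``$\delta$ comparable to $\xi$'' so that the enlarged window $[0,3\delta)$ can absorb the desynchronization $|t(y_1)-t(y_2)|\le\xi$; but since $\xi$ is a fixed positive number and $\delta\to 0^+$, eventually $3\delta<\xi$ and the window no longer swallows the time shift. Furthermore, the appeal to uniform continuity of $\varphi$ on $[0,\xi]$ cannot help at the separation time $t_0\in[0,T]$, where one is comparing $\psi_{t_0+s+t(y_1)}(y_1)$ with $\psi_{t_0+s+t(y_2)}(y_2)$: these are governed by the discontinuous semiflow $\psi$, not by $\varphi$, and no uniform-continuity slack is available there.

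The paper avoids this problem by a different mechanism. It decomposes a separated set $E$ into $E_\xi=E\cap X_\xi$, $E_D=E\cap D$, and $N_\xi=E\cap D_\xi$ and treats each piece separately. The delicate piece is $N_\xi$: one covers $\overline{D_\xi}$ by finitely many balls $B(z_k,r)$ chosen so that any two points in the same ball satisfy $d(\varphi_t(x),\varphi_t(y))<\varepsilon$ for all $t\in[0,\xi_0-\xi]$; since $\psi=\varphi$ on $D_\xi$ for these times, separation of points in the same ball $N_\xi^k$ must occur at some $t\in[\tfrac{\xi_0-\xi}{2},T]$. One then pushes \emph{every} point of $N_\xi^k$ by the \emph{same} time $\xi$. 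Because the shift is common to all points, the synchronization built into $\hat d_\delta$ is preserved exactly, so $\psi_\xi(N_\xi^k)\subset X_\xi$ is $(\psi_{|X_\xi},T-\xi,\varepsilon,\delta)$-separated with no loss in $\varepsilon$ and no need to enlarge $\delta$. The price is the covering multiplicity $n=n(\varepsilon)$, which appears as a constant factor $C=ne^K$ and vanishes after dividing by $T$ and letting $T\to\infty$. This fixed-time push is the key idea your proposal is missing.
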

\begin{proof}

Since $X_\xi\subset X$, it follows that $\hat{P}_s(\psi_{|X_\xi},f)\leq\hat{P}_s(\psi,f)$. We now prove that the converse inequality is also true.

Given $T>0$, $\varepsilon>0$ and $\delta>0$, let $E\subset X$ be a finite $(\psi, T,\varepsilon,\delta)$-separated set with respect to $\hat{d}_\delta$ and consider
\begin{displaymath}
E_\xi=E\cap X_\xi,\; E_D=E\cap D \text{ and } N_\xi=E\cap  D_\xi.
\end{displaymath}
Since $X_\xi$ is $\psi_t$-invariant it follows easily that $E_\xi\subset X_\xi$ is a $(\psi_{|X_\xi}, T,\varepsilon,\delta)$-separated set with respect to $\hat{d}_\delta$. In particular,
\begin{equation}\label{eq: estimative Exi}
 \sum_{x \in E_\xi} \exp\left({\int_0^t f(\psi_s(x))ds}\right) \leq \hat{Z}_s(\psi_{|X_\xi},f,\varepsilon,\delta,T).
\end{equation}

Similarly, since for any $x\in D$ we have $\psi_t(x)\in X_\xi$ for every $t>0$,  $\psi_t(E_D)$ is a $(\psi_{|X_\xi}, T,\varepsilon,\frac{\delta}{2})$-separated set with respect to $\hat{d}_\delta$ for every $t>0$ sufficiently small (recall the definition of $\hat{d}_\delta$ and that $\text{Lip}(I)\leq 1$ (condition (C1))). In particular,
\begin{equation}\label{eq: estimative ED}
 \sum_{x \in E_D} \exp\left({\int_0^t f(\psi_s(x))ds}\right) \leq \hat{Z}_s(\psi_{|X_\xi},f,\varepsilon,\frac{\delta}{2},T).
\end{equation}

We now claim that there exists a constant $C=C(\varepsilon)>0$, independent of $T$ and $E$, so that
\begin{equation}\label{eq: estimative Nxi}
 \sum_{x \in N_\xi} \exp\left({\int_0^t f(\psi_s(x))ds}\right) \leq C\hat{Z}_s(\psi_{|X_\xi},f,\varepsilon,\delta,T).
\end{equation}
By the compactness of  $\overline{D_\xi}$  and the continuity of  $\varphi_t$, there exists $r>0$ such that for any $x,y\in \overline{D_\xi}$ satisfying $d(x,y)<2r$ we have that
\begin{displaymath}
d(\varphi_t(x),\varphi_t(y))<\varepsilon, \text{ for all } t\in[0,\xi_0-\xi]
\end{displaymath}
where $\xi_0$ is given in properties (C1)-(C5). By compactness, there exists $\{z_k\}_{k=1}^n$, with $n$ depending on $\varepsilon$ but not on $T$ nor on $E$, such that
\begin{displaymath}
\overline{D_\xi}\subset \bigcup_{k=1}^n B(z_k,r).
\end{displaymath}
Thus, considering
\begin{displaymath}
N_\xi^k=N_\xi \cap B(z_k,r)
\end{displaymath}
for every $k=1,2,\ldots,n$ we have that $N_\xi=\cup_{k=1}^nN_\xi^k$ and, moreover, for any $x,y \in N_\xi^k$,
\begin{displaymath}
d(\varphi_t(x),\varphi_t(y))<\varepsilon, \text{ for every } t\in[0,\xi_0-\xi].
\end{displaymath}
In particular, since ${\varphi_t}_{|D_\xi}={\psi_t}_{|D_\xi}$ for every $t\in[0,\xi_0-\xi]$, we get that for any $x,y \in N_\xi^k$,
\begin{displaymath}
d(\psi_t(x),\psi_t(y))<\varepsilon, \text{ for every } t\in[0,\xi_0-\xi].
\end{displaymath}
Thus, assuming $\delta<\frac{\xi_0-\xi}{2}$ we get that
\begin{displaymath}
\hat{d}_{\delta}(\psi_t(x),\psi_t(y))<\varepsilon, \mbox{ for every }t \in \left[0,\frac{\xi_0-\xi}{2}\right].
\end{displaymath}
Therefore, since $E$ is $(\psi,T,\varepsilon,\delta)$-separated with respect to $\hat{d}_\delta$, for every $x, y\in N_\xi^k$ with $x\neq y$, there exists $t\in[\frac{\xi_0-\xi}{2},T]$ such that
\begin{displaymath}
\hat{d}_{\delta}(\psi_t(x),\psi_t(y))\geq\varepsilon.
\end{displaymath}
Consequently, recalling that $\frac{\xi_0-\xi}{2}>\xi$ and $\varphi_{\xi}(\overline{D_{\xi}})\subset X_{\xi}$, we get that $\psi_\xi(N_\xi^k)$ is a $(\psi|_{X_{\xi}},T-\xi,\varepsilon,\delta)$-separated set with respect to $\hat{d}_\delta$. Now, using inequality \eqref{eq: estimative integral f} and the comments that follow it, we get that, for every $f\in \mathcal{A}(\psi)$ and $k\in \{1,2,\ldots,n\}$,
\begin{displaymath}
\sum_{x \in N_\xi^k} \exp\left(\int_0^t f(\psi_s(x))ds\right)
\end{displaymath}
is equal to
\begin{displaymath}
 \sum_{x \in N_\xi^k}\exp\left(\int_0^t f(\psi_s(x))ds - \int_0^t f(\psi_s(\psi_{\xi}(x)))ds + \int_0^t f(\psi_s(\psi_{\xi}(x)))ds \right),
\end{displaymath}
which is smaller than or equal to
\begin{displaymath}
\sum_{x \in N_\xi^k} \exp\left(\left|\int_0^t f(\psi_s(x))ds - \int_0^t f(\psi_s(\psi_{\xi}(x)))ds \right|\right) \exp\left(\int_0^t f(\psi_s(\psi_{\xi}(x)))ds\right),
\end{displaymath}
which, by its turn, is smaller than or equal to
\begin{displaymath}
  e^K \sum_{ x \in N_\xi^k}\exp\left(\int_0^t f(\psi_s(\psi_{\xi}(x)))ds\right),
\end{displaymath}
which, finally, is equal to
\begin{displaymath}
e^K \sum_{y \in \psi_\xi(N_\xi^k)}\exp\left(\int_0^t f(\psi_s(y))ds\right).
\end{displaymath}
This fact combined with the previous observation that $\psi_\xi(N_\xi^k)$ is a $(\psi|_{X_{\xi}},T-\xi,\varepsilon,\delta)$-separated set with respect to $\hat{d}_\delta$, implies that
\begin{displaymath}
\sum_{x \in N_\xi^k} \exp\left(\int_0^t f(\psi_s(x))ds\right)\leq e^K\hat{Z}_s(\psi_{|X_\xi},f,\varepsilon,\delta,T).
\end{displaymath}
Consequently, recalling that $N_\xi=\cup_{k=1}^nN_\xi^k$,
\begin{displaymath}
\begin{split}
\sum_{x \in N_\xi} \exp\left(\int_0^t f(\psi_s(x))ds\right)&\leq\sum_{k=1}^n \left( \sum_{x \in N_\xi^k} \exp\left(\int_0^t f(\psi_s(x))ds\right)\right)\\
&\leq n e^K\hat{Z}_s(\psi_{|X_\xi},f,\varepsilon,\delta,T).
\end{split}
\end{displaymath}
Thus, taking $C=ne^K$ we complete the proof of the claim given in \eqref{eq: estimative Nxi}.

Now, using \eqref{eq: estimative Exi}, \eqref{eq: estimative ED} and \eqref{eq: estimative Nxi}, the monotonicity of $\hat{Z}_s(\psi_{|X_\xi},f,\varepsilon,\delta,T)$ with respect to $\delta$ and recalling that $E=E_\xi\cup E_D\cup N_\xi$, it follows that
\begin{equation*}
 \sum_{x \in E} \exp\left({\int_0^t f(\psi_s(x))ds}\right) \leq (C+2)\hat{Z}_s(\psi_{|X_\xi},f,\varepsilon,\frac{\delta}{2},T),
\end{equation*}
where $C$ is a constant that depends neither on $T$ nor on $E$. So, since this inequality holds true for any finite $(\psi, T,\varepsilon,\delta)$-separated set $E$, we get that
\begin{displaymath}
\hat{Z}_s(\psi,f,\varepsilon,\delta,T) \leq (C+2)\hat{Z}_s(\psi_{|X_\xi},f,\varepsilon,\frac{\delta}{2},T).
\end{displaymath}
Therefore, taking logarithm on both sides of the inequality, dividing by $T$ and making $T\to+\infty$, $\varepsilon\to 0^+$ and $\delta \to 0^+$ it follows that
\begin{displaymath}
\hat{P}_s(\psi,f)\leq \hat{P}_s(\psi_{|X_\xi},f),
\end{displaymath}
which combined with the initial observation concludes the proof of the lemma.
\end{proof}

In what follows we relate the topological pressures of $(\tilde{\psi},\tilde{f})$ and $(\psi_{|X_\xi},f)$. Recall $\tilde{\psi}$ defined  in Section \ref{sec: quotient} and $\tilde{f}=f\circ H^{-1}$. Moreover, given $\delta>0$ we consider the pseudometrics
\begin{displaymath}
\hat{d}_\delta^{\tilde{\psi}}(x,y)=\inf\{\tilde{d}(\tilde{\psi}_{s}(x),\tilde{\psi}_{s}(y)):s\in [0,\delta)\}
\end{displaymath}
and
\begin{displaymath}
\overline{d}_\delta^{\tilde{\psi}}(x,y)=\inf\{\tilde{d}(\tilde{\psi}_{s_1}(x),\tilde{\psi}_{s_2}(y)):s_1,s_2\in [0,\delta)\}
\end{displaymath}
on $\widetilde{X_\xi}$ induced by $\tilde{d}$ and $\tilde{\psi}$. These are the versions of $\hat{d}_\delta$ and $\overline{d}_\delta$ for $(\widetilde{X_\xi},\tilde{d}, \tilde{\psi})$. In particular, this allows us to consider $\overline{P}_r(\tilde{\psi},\tilde{f})$, $\overline{P}_s(\tilde{\psi},\tilde{f})$, $\hat{P}_r(\tilde{\psi},\tilde{f})$ and $\hat{P}_s(\tilde{\psi},\tilde{f})$.

\begin{lemma}\label{lemma: ineq conjugacy}
\begin{displaymath}
\overline{P}_s(\tilde{\psi},\tilde{f})\leq \overline{P}_s(\psi_{|X_\xi},f) \text{ and }\overline{P}_r(\tilde{\psi},\tilde{f})\leq \overline{P}_r(\psi_{|X_\xi},f).
\end{displaymath}
Similarly,
\begin{displaymath}
\hat{P}_s(\tilde{\psi},\tilde{f})\leq \hat{P}_s(\psi_{|X_\xi},f) \text{ and } \hat{P}_r(\tilde{\psi},\tilde{f})\leq \hat{P}_r(\psi_{|X_\xi},f).
\end{displaymath}
\end{lemma}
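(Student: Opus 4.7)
The plan is to transport the (semi)conjugacy and the bound $\tilde d \leq d$ from Proposition \ref{prop: conjugacy} through the definitions of spanning and separated sets, so that the dynamical configurations on the two sides correspond. I will use four facts: $H:X_\xi \to \widetilde{X_\xi}$ is a bijection; the semiconjugacy $\tilde\psi_t \circ H = H \circ \psi_t$ for all $t \geq 0$; the pointwise estimate $\tilde d(H(a), H(b)) \leq d(a,b)$ for all $a, b \in X_\xi$; and the identity $\tilde f \circ H = f$ on $X_\xi$, which forces $\int_0^T \tilde f(\tilde\psi_s(H(x)))\,ds = \int_0^T f(\psi_s(x))\,ds$ for every $x \in X_\xi$ and $T > 0$. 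Combining the first three at each $s \in [0,\delta)$ (respectively $(s_1, s_2)\in [0,\delta)^2$) yields the key pseudometric bounds
\begin{equation*}
\hat d^{\tilde\psi}_\delta(H(x),H(y)) \leq \hat d^\psi_\delta(x,y) \quad \text{and} \quad \overline d^{\tilde\psi}_\delta(H(x),H(y)) \leq \overline d^\psi_\delta(x,y)
\end{equation*}
for all $x, y \in X_\xi$.

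\textbf{Separated sets (the inequalities for $\overline P_s$ and $\hat P_s$).} Given a finite $(\tilde\psi, T, \varepsilon, \delta)$-separated set $F \subset \widetilde{X_\xi}$ with respect to $\overline d^{\tilde\psi}_\delta$, I would set $F' := H^{-1}(F) \subset X_\xi$, which has the same cardinality as $F$. For distinct $x, y \in F'$ the separation hypothesis furnishes some $t \in [0,T]$ with $\overline d^{\tilde\psi}_\delta(\tilde\psi_t(H(x)), \tilde\psi_t(H(y))) \geq \varepsilon$, and the displayed bound lifts this to $\overline d^\psi_\delta(\psi_t(x), \psi_t(y)) \geq \varepsilon$; hence $F'$ is $(\psi_{|X_\xi}, T, \varepsilon, \delta)$-separated with respect to $\overline d^\psi_\delta$. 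The integral identity renders the exponential sums over $F$ and $F'$ equal, so $\overline Z_s(\tilde\psi, \tilde f, \varepsilon, \delta, T) \leq \overline Z_s(\psi_{|X_\xi}, f, \varepsilon, \delta, T)$, and taking $\limsup_{T\to\infty}\frac{1}{T}\log$ followed by $\varepsilon, \delta \to 0^+$ gives $\overline P_s(\tilde\psi, \tilde f) \leq \overline P_s(\psi_{|X_\xi}, f)$. The argument for $\hat P_s$ is word-for-word identical with $\hat d$ replacing $\overline d$.

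\textbf{Spanning sets (the inequalities for $\overline P_r$ and $\hat P_r$) and the main difficulty.} The direction of the transport reverses in this case. Starting from a $(\psi_{|X_\xi}, T, \varepsilon, \delta)$-spanning set $E \subset X_\xi$ with respect to $\overline d^\psi_\delta$, I would check that $H(E) \subset \widetilde{X_\xi}$ is $(\tilde\psi, T, \varepsilon, \delta)$-spanning: given $\tilde z \in \widetilde{X_\xi}$ and its unique preimage $z := H^{-1}(\tilde z) \in X_\xi$, pick $y \in E$ with $\overline d^\psi_\delta(\psi_t(z), \psi_t(y)) < \varepsilon$ for every $t \in [0,T]$; the pseudometric bound then yields $\overline d^{\tilde\psi}_\delta(\tilde\psi_t(\tilde z), \tilde\psi_t(H(y))) < \varepsilon$. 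The integral identity again equates the exponential sums, so $\overline Z_r(\tilde\psi, \tilde f, \varepsilon, \delta, T) \leq \overline Z_r(\psi_{|X_\xi}, f, \varepsilon, \delta, T)$ and the limit procedure gives $\overline P_r(\tilde\psi, \tilde f) \leq \overline P_r(\psi_{|X_\xi}, f)$; the $\hat P_r$ inequality is obtained by substituting $\hat d$ for $\overline d$. There is no real obstacle here: the only point requiring care is that the time parameters appearing inside $\hat d_\delta$ and $\overline d_\delta$ must be commuted through $H$ via the semiconjugacy $\tilde\psi_s \circ H = H \circ \psi_s$ before the pointwise bound $\tilde d \leq d$ is applied, and no hypothesis on the admissibility of $f$ is needed at this step.
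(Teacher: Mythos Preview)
Your argument is correct and follows the same overall strategy as the paper: pull back separated sets via $H^{-1}$ (push forward spanning sets via $H$), use the semiconjugacy $\tilde\psi_t\circ H=H\circ\psi_t$ to align orbits, and use $\tilde f\circ H=f$ to identify the exponential sums. The one difference is in how you compare distances. The paper invokes only the uniform continuity of $H$, obtaining a $\beta=\beta(\varepsilon)$ with $d(x,y)<\beta\Rightarrow \tilde d(H(x),H(y))<\varepsilon$, so that a $(\tilde\psi,T,\varepsilon,\delta)$-separated set becomes $(\psi_{|X_\xi},T,\beta,\delta)$-separated and one must then let $\beta(\varepsilon)\to0$ together with $\varepsilon\to0$. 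You instead use the sharper pointwise bound $\tilde d(\pi(x),\pi(y))\le d(x,y)$ recorded in Section~\ref{sec: quotient}, which yields the pseudometric inequalities $\overline d^{\tilde\psi}_\delta(H(x),H(y))\le\overline d^{\psi}_\delta(x,y)$ and $\hat d^{\tilde\psi}_\delta(H(x),H(y))\le\hat d^{\psi}_\delta(x,y)$ directly and lets you keep the same scale $\varepsilon$ on both sides. This is a clean simplification; the paper's version, on the other hand, would go through for any uniformly continuous semiconjugacy, not just a $1$-Lipschitz one.
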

\begin{proof}
Since the arguments to prove these four inequalities are very similar we only prove the first one. Recall that by the constructions presented in Section \ref{sec: quotient} there exists a uniformly continuous bijection $H:X_\xi\to \widetilde{X_\xi}$ so that for all $t\geq 0$
\begin{equation}\label{eq: conjugacy proof}
\tilde{\psi}_t\circ H=H\circ \psi_t.
\end{equation}

Let $\varepsilon>0$. By the uniform continuity of $H$, there exists $\beta=\beta(\varepsilon)>0$ so that
\begin{equation}\label{eq:2}
  d(x,y)<\beta\Rightarrow \tilde{d}(H(x),H(y))<\varepsilon.
  \end{equation}
Now take $\delta>0$, $T>0$ and let $\tilde{F}\subset \widetilde{X_\xi}$ be a $(\tilde{\psi}, \delta,\varepsilon, T)$-separated set with respect to $\overline{d}_\delta^{\tilde{\psi}}$ and notice that
$F=H^{-1}(\tilde{F})\subset X_\xi$ is a $(\psi,\delta,\beta,T)$-separated set with respect to $\overline{d}_\delta$. In fact, given $x,y\in F$ with $x\neq y$, we have that $H(x)\neq H(y)$ and, moreover, since these points are $(\tilde{\psi}, \delta,\varepsilon, T)$-separated with respect to $\overline{d}_\delta^{\tilde{\psi}}$, there exists $t\in[0,T]$ for which
\begin{displaymath}
\overline{d}_\delta^{\tilde{\psi}}(\tilde{\psi}_t(H(x)),\tilde{\psi}_t(H(y)))\geq \varepsilon.
\end{displaymath}
Thus, recalling \eqref{eq: conjugacy proof}, it follows that
\begin{displaymath}
\overline{d}_\delta^{\tilde{\psi}}(H(\psi_t(x)),H(\psi_t(y)))\geq \varepsilon.
\end{displaymath}
Consequently, invoking \eqref{eq:2}, we get that
\begin{displaymath}
\overline{d}_\delta(\psi_t(x),\psi_t(y))\geq\beta.
\end{displaymath}
This proves that $F\subset X_\xi$ is, indeed, a $(\psi,\delta,\beta,T)$-separated set with respect to $\overline{d}_\delta$. Moreover, since $H$ is a bijection and recalling once again that $\tilde{\psi}_t\circ H=H\circ \psi_t$ and $\tilde{f}=f\circ H^{-1}$,
\begin{displaymath}
\begin{split}
\sum_{y\in \tilde{F}}\exp\left(\int_{0}^{T}\tilde{f}(\tilde{\psi}_s(y))ds\right) &=\sum_{x\in H^{-1}( \tilde{F})}\exp\left(\int_{0}^{T}\tilde{f}(\tilde{\psi}_s(H(x)))ds\right) \\
&=\sum_{x\in F}\exp\left(\int_{0}^{T}f(\psi_s(x)ds\right).\\
\end{split}
\end{displaymath}
Combining these two observations we obtain that
\begin{displaymath}
\overline{Z}_s(\tilde{\psi},\delta,\varepsilon,T)\leq  \overline{Z}_s(\psi_{|X_\xi},\delta,\beta,T).
\end{displaymath}
Finally, as $\lim_{\varepsilon\to0^+}\beta(\varepsilon)=0$, taking logarithms, dividing by $T$ and taking the appropriate limits we conclude the proof of the first inequality.
\end{proof}

\begin{lemma}\label{lem: Ps psi tilde and Ps psi restrita}
\begin{displaymath}
P(\tilde{\psi},\tilde{f})= \hat{P}_s(\psi_{|X_\xi},f).
\end{displaymath}
\end{lemma}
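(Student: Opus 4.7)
The lemma will be established via two separate inequalities.

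For $P(\tilde\psi,\tilde f)\leq \hat P_s(\psi_{|X_\xi},f)$, the plan is to apply Theorem~\ref{theo: pressure continuous} to the continuous semiflow $\tilde\psi$ on the compact metric space $(\widetilde{X_\xi},\tilde d)$ (furnished by Proposition~\ref{prop: conjugacy}), obtaining $P(\tilde\psi,\tilde f)=\hat P_s(\tilde\psi,\tilde f)$, and then combine with Lemma~\ref{lemma: ineq conjugacy}.

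For the reverse inequality, the plan is to lift any $(\psi_{|X_\xi},T,\varepsilon,\delta)$-separated set $F\subset X_\xi$ with respect to $\hat d_\delta$ to a $(\tilde\psi,T+\delta,\varepsilon/2)$-separated set $\tilde F:=H(F)\subset \widetilde{X_\xi}$ in the classical sense, with essentially the same potential-weighted sum. Given distinct $x,y\in F$, I will fix $t\in[0,T]$ with $d(\psi_{t+s}(x),\psi_{t+s}(y))\geq \varepsilon$ for every $s\in[0,\delta)$, and look for a single instant $s^*\in[0,\delta)$ at which both $\psi_{t+s^*}(x)$ and $\psi_{t+s^*}(y)$ have singleton $\sim$-equivalence classes. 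Inspecting the definition of $\sim$, any $z\in X_\xi\setminus I(D)$ has class $\{z\}$, since the nontrivial identifications require membership in $D$ or $I(D)$; therefore, it will suffice to choose $s^*$ outside $B_x\cup B_y$, where $B_x:=\{s\in[0,\delta):\psi_{t+s}(x)\in I(D)\}$ and $B_y$ is defined analogously.

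The main obstacle is controlling $B_x$ and $B_y$, and this is where condition (C4), the $\xi$-half-tube on $I(D)$, enters decisively: it implies that any two instants at which a $\psi$-trajectory visits $I(D)$ are separated by at least $\min(\xi,\eta)$ (inside an impulsive segment by the half-tube, and across segments because impulsive times are $\eta$-separated). Choosing $\delta<\min(\xi,\eta)$ therefore forces $|B_x|,|B_y|\leq 1$, so any $s^*\in[0,\delta)\setminus(B_x\cup B_y)$ works. With both equivalence classes reduced to singletons at $s^*$, the bi-Lipschitz relation recalled in Section~\ref{sec: quotient} (applicable since condition (C1) gives $\mathrm{Lip}(I)\leq 1$) will yield
\begin{displaymath}
\tilde d\bigl(\tilde\psi_{t+s^*}(H(x)),\tilde\psi_{t+s^*}(H(y))\bigr)\geq \tfrac{1}{2}d(\psi_{t+s^*}(x),\psi_{t+s^*}(y))\geq \tfrac{\varepsilon}{2},
\end{displaymath}
and the semiconjugacy $\tilde\psi_\bullet\circ H=H\circ\psi_\bullet$ converts this into the required classical separation of $\tilde F$ at time $t+s^*\in[0,T+\delta]$.

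It remains to match the potentials. Since $\tilde f=f\circ H^{-1}$ and $X_\xi$ is $\psi$-invariant, $\tilde f(\tilde\psi_s(H(x)))=f(\psi_s(x))$ for every $x\in X_\xi$ and $s\geq 0$, so the sums of $\exp\bigl(\int_0^{T+\delta}\tilde f\circ\tilde\psi_s\bigr)$ over $\tilde F$ and of $\exp\bigl(\int_0^T f\circ\psi_s\bigr)$ over $F$ differ by at most the factor $e^{\|f\|_\infty\delta}$ coming from the extra time interval. This produces $Z_s(\tilde\psi,\tilde f,\varepsilon/2,T+\delta)\geq e^{-\|f\|_\infty\delta}\,\hat Z_s(\psi_{|X_\xi},f,\varepsilon,\delta,T)$; taking logarithms, dividing by $T$, and letting $T\to\infty$, $\varepsilon\to 0^+$ and $\delta\to 0^+$ will give $P(\tilde\psi,\tilde f)\geq \hat P_s(\psi_{|X_\xi},f)$, closing the argument.
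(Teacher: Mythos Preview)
Your argument is correct and follows the same two-inequality outline as the paper. The inequality $P(\tilde\psi,\tilde f)\leq \hat P_s(\psi_{|X_\xi},f)$ is handled identically, via Theorem~\ref{theo: pressure continuous} and Lemma~\ref{lemma: ineq conjugacy}. For the reverse inequality the paper also pushes a $(\psi_{|X_\xi},T,\varepsilon,\delta)$-separated set through $\pi$ and matches the potential sums via the semiconjugacy; the only difference is that the paper invokes \cite[Lemma~5.6]{JSM19} to obtain directly that $\pi(E)$ is $(\tilde\psi,T,\varepsilon)$-separated, whereas you supply a self-contained argument yielding the slightly weaker (but harmless in the limit) $(\tilde\psi,T+\delta,\varepsilon/2)$-separation together with the extra factor $e^{\|f\|_\infty\delta}$.

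One small inaccuracy: your bound $|B_x|\leq 1$ is not quite right. If the interval $[t,t+\delta)$ straddles an impulsive time $\tau_n(x)$, you can have a visit to $I(D)$ in the pre-jump segment arbitrarily close to $\tau_n(x)$ (the half-tube condition on $I(D)$ only forbids revisits to $I(D)$ under $\varphi$, not passage from $I(D)$ to $D$), together with the automatic visit at $\tau_n(x)$ itself, giving $|B_x|=2$. However, with $\delta<\min(\xi,\eta)$ one still gets $|B_x|\leq 2$, so $B_x\cup B_y$ is finite and some $s^*\in[0,\delta)\setminus(B_x\cup B_y)$ exists; the rest of your argument goes through unchanged.
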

\begin{proof}
The fact that $P(\tilde{\psi},\tilde{f})=\hat{P}_s(\tilde{\psi},\tilde{f})\leq \hat{P}_s(\psi_{|X_\xi},f)$ follows from Theorem \ref{theo: pressure continuous} and Lemma \ref{lemma: ineq conjugacy}.

In order to prove the converse inequality, take $\delta>0$, $T>0$, $0<\varepsilon<\delta$ and let $E\subset X_\xi$ be a $(\psi_{|X_\xi}, T,\varepsilon,\delta)$-separated set with respect to $\hat{d}_\delta$. It follows from the proof of \cite[Lemma 5.6]{JSM19} that $\pi (E)$ is a $(\tilde{\psi},T,\varepsilon)$-separated set with respect to $\tilde{d}$, where $\pi$ and $\tilde{d}$ are as in Section \ref{sec: quotient}. Moreover, since $\pi_{|X_\xi}$ is a bijection and recalling that $\tilde{f}=f\circ (\pi_{|X_\xi})^{-1}$ and $\tilde{\psi}_t( \pi (x) )= \pi(\psi_t (x))$ for every $x\in X_\xi$,
\begin{displaymath}
 \sum_{x \in E} \exp\left({\int_0^t f(\psi_s(x))ds}\right)= \sum_{\tilde{x} \in \pi(E)} \exp\left({\int_0^t \tilde{f}(\tilde{\psi}_s(\tilde{x}))ds}\right).
\end{displaymath}
Therefore,
\begin{displaymath}
\hat{Z}_s(\psi_{|X_\xi},f,\varepsilon,\delta,T)\leq Z(\tilde{\psi},\tilde{f},\varepsilon,T).
\end{displaymath}
Thus, taking logarithm on both sides of the inequality, dividing by $T$ and making $T\to+\infty$, $\varepsilon\to 0^+$ and $\delta \to 0^+$ it follows that
\begin{displaymath}
\hat{P}_s(\psi_{|X_\xi},f)\leq P(\tilde{\psi},\tilde{f}),
\end{displaymath}
which combined with the initial observation concludes the proof of the lemma.
\end{proof}

Putting together Lemmas \ref{lem: Ps psi and Ps psi restrita} and \ref{lem: Ps psi tilde and Ps psi restrita} we conclude that
\begin{corollary} \label{cor: Phats equal P tilde}
\begin{displaymath}
 \hat{P}_s(\psi,f) =P(\tilde{\psi},\tilde{f}).
\end{displaymath}
\end{corollary}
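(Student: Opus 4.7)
The plan is essentially a one-line chain from the two preceding lemmas. Lemma \ref{lem: Ps psi and Ps psi restrita} gives the identity $\hat{P}_s(\psi,f) = \hat{P}_s(\psi_{|X_\xi},f)$, which erases the contribution of the transient strip $D \cup D_\xi$ to the upper pressure. Lemma \ref{lem: Ps psi tilde and Ps psi restrita} gives the identity $\hat{P}_s(\psi_{|X_\xi},f) = P(\tilde{\psi},\tilde{f})$, which transports the computation to the continuous quotient semiflow $\tilde{\psi}$ on the compact metric space $\widetilde{X_\xi}$ with the conjugated potential $\tilde{f} = f \circ H^{-1}$. So the proof I would write is: combine the two equalities by transitivity to conclude $\hat{P}_s(\psi,f) = P(\tilde{\psi},\tilde{f})$.

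Conceptually, what this chain does is reduce a pressure computed via the pseudometric $\hat{d}_\delta$ for the possibly discontinuous semiflow $\psi$ to the classical Bowen pressure of a continuous semiflow on a compact space. The first reduction relies on the forward invariance of $X_\xi$ together with a finite cover of $\overline{D_\xi}$ whose size depends only on $\varepsilon$, plus a uniform additive bound (up to a constant $e^K$) on Birkhoff-type integrals coming from the admissibility of $f \in \mathcal{A}(\psi)$. The second reduction uses the uniformly continuous bijection $H : X_\xi \to \widetilde{X_\xi}$ produced in Proposition \ref{prop: conjugacy} together with the fact that $\tilde\psi$ is continuous, which permits the use of Theorem \ref{theo: pressure continuous} to identify $\hat P_s(\tilde{\psi},\tilde f)$ with the classical $P(\tilde\psi,\tilde f)$.

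No genuine obstacle remains at the level of the corollary itself, since both component equalities are already in place. If I were writing the argument from scratch, the only delicate point would be the one that is already absorbed into Lemma \ref{lem: Ps psi and Ps psi restrita}: handling separated points that lie in the transient strip $D_\xi$, by flowing them forward by the bounded time $\xi$ into $X_\xi$ while controlling the distortion of $\int_0^T f \circ \psi_s \, ds$ through the admissibility bound and the uniform estimate $d(\psi_s(x),\psi_s(\psi_\xi(x))) < \rho$. Granted this, the corollary is purely formal chaining.
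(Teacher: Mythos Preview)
Your proposal is correct and matches the paper's own proof exactly: the corollary is obtained by chaining Lemma~\ref{lem: Ps psi and Ps psi restrita} and Lemma~\ref{lem: Ps psi tilde and Ps psi restrita}, and your conceptual summary of what each lemma contributes is accurate.
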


\subsection{Conclusion of the proof} It follows from Theorem \ref{theo: pressure continuous}, Lemma \ref{lemma: ineq conjugacy}, Lemma \ref{lemma: Pover restric leq Pover psi}, Lemma \ref{lemma: lower upper}, Lemma \ref{lemma: Pover x Phat} and Corollary \ref{cor: Phats equal P tilde}, respectively, that
\begin{displaymath}
\begin{split}
P(\tilde{\psi},\tilde{f})&= \overline{P}_r(\tilde{\psi},\tilde{f}) \leq \overline{P}_r(\psi_{|X_\xi},f)\\
&\leq \overline{P}_r(\psi,f) \leq \overline{P}_s(\psi,f) \\
&\leq  \hat{P}_s(\psi,f) =P(\tilde{\psi},\tilde{f}).
\end{split}
\end{displaymath}
In particular,
\begin{displaymath}
P(\tilde{\psi},\tilde{f})=\overline{P}_r(\psi,f)=\overline{P}_s(\psi,f)=\hat{P}_s(\psi,f).
\end{displaymath}
Moreover, by Lemma \ref{lemma: Pover x Phat} and Lemma \ref{lemma: lower upper} we obtain that
\begin{displaymath}
\overline{P}_r(\psi,f)\leq \hat{P}_r(\psi,f)\leq \hat{P}_s(\psi,f).
\end{displaymath}
Thus,
\begin{displaymath}
P(\tilde{\psi},\tilde{f})=\overline{P}_r(\psi,f)=\overline{P}_s(\psi,f)=\hat{P}_s(\psi,f)=\hat{P}_r(\psi,f).
\end{displaymath}

Now, from \cite[Equations (5.5) and (6.1)]{ACS17} (see also the conclusion of the proof of \cite[Theorem C]{ACS17} on p. 854) we know that
\begin{displaymath}
P(\tilde{\psi},\tilde{f})=\sup_{\nu\in \mathcal{M}_{\tilde{\psi}}(X)}\left\{ h_\nu(\tilde{\psi}_1)+\int \tilde{f}d\nu\right\}=\sup_{\mu\in \mathcal{M}_\psi(X)}\left\{ h_\mu(\psi_1)+\int fd\nu\right\}
\end{displaymath}
where $\tilde{f}= f\circ H^{-1}$ and $\tilde{\psi}_1$ and $\psi_1$ stand for the time one maps of the semiflows $\tilde{\psi}$ and $\psi$, respectively. Consequently,
\begin{displaymath}
\overline{P}_s(\psi,f)=\overline{P}_r(\psi,f)= \hat{P}_s(\psi,f)= \hat{P}_r(\psi,f)=\sup_{\mu\in \mathcal{M}_\psi(X)}\left\{ h_\mu(\psi_1)+\int fd\nu\right\}.
\end{displaymath}
completing the proof of Theorem \ref{theo: var princ}.
\qed


\medskip{\bf Acknowledgements.} L. B. was partially supported by a CNPq-Brazil PQ fellowship under Grant No. 306484/2018-8.


\end{document}